\chardef\bslash=`\\ 
\newtheorem{thm}{Theorem}[section]
\newtheorem{cor}[thm]{Corollary}
\newtheorem{lem}[thm]{Lemma}
\newtheorem{prop}[thm]{Proposition}
\theoremstyle{definition}
\newtheorem{defn}{Definition}[section]
\newtheorem*{notation}{Notation and Terminology}
\theoremstyle{remark}
\newcommand{\eval}[2][\right]{\relax
  \ifx#1\right\relax \left.\fi#2#1\rvert}
\title{Signatures of alternating group actions with non-zero quotient genus}
\author{Jennifer Paulhus}
\address{Mount Holyoke College}
\email{jpaulhus@mtholyoke.edu}
\author{Aaron Wootton}
\address{The University of Portland}
\email{wootton@up.edu}
\date{\today}
\begin{document}

\begin{abstract} We classify up to signature all the ways the alternating group $A_n$ can act on a compact Riemann surfaces when the quotient genus is greater than $0$. In particular, we prove that for $A_n$ with $n>6$ every potential signature for the group acting with quotient genus greater than $0$ is an actual signature. We also show that in the cases $n=5$ and $n=6$, the only failures occur for $[1;2]$ and $[1;3]$, respectively.  Along the way we also prove that for any finite simple non-abelian group, all potential signatures with quotient genus greater than $1$ are actual signatures.
\end{abstract}
\subjclass{30F20,14H37,20D06,20B05}

\maketitle

\section{Introduction}

The study of automorphism groups of compact Riemann surfaces, or equivalently monodromy groups of transitive branched covers, is classical in nature, with initial results dating back to Klein \cite{Klein1878}. However, it has also experienced a renaissance in the last 30 years due to significant advances in Group Theory, such as the classification of finite simple groups, and the rise of computer algebra systems such as GAP or Magma which make once insurmountable problems solvable with just a few lines of code. Continued interest in this area also stems from its application to other areas of mathematics such as the study of Mapping Class Groups \cite{NAKAMURA20183585} and Inverse Galois Theory \cite{Fried1991}.

An early and long established result of Hurwitz \cite{Hurwitz1893} states that every finite group $G$ acts on some compact Riemann surface of genus greater than 1. Kulkarni \cite{KULKARNI1987195} further showed that for a given finite group $G$, there is an infinite sequence of genera of Riemann surfaces, called the {\it genus spectrum} on which $G$ acts on a surface of each genus in this sequence. A next natural step in the study of group actions, then, is to examine the different ways in which a group can act within its genus spectrum. 

There are many different ways to distinguish between group actions on a compact Riemann surface \cite{Broughton2022FutureDI} with many classification schemes fully understood for low genera and certain families of groups, see for example \cite{BROUGHTON1991233} and \cite{10.1093/qmath/17.1.86}. A first step in most classification schemes, and a rudimentary classification in its own right, is to determine with which signatures a given group $G$ can act. This will be the focus of our work here.

Throughout the paper, $G$ will represent a finite group, and 
$\mathcal{O}(G) =\{\text{ord}(g) : g  \in G\} -\{1\}$ the {\it order set} of $G$.  For $a,b \in G$, we use standard notation for their {\it commutator}: $[a,b]=a^{-1}b^{-1}ab$. A tuple $[h; n_1,\ldots, n_r]$ is the {\em signature} of a finite group $G$ acting on a compact Riemann surface $S$ of genus $\sigma \geq 2$ if the quotient surface $S/G$ has genus $h$ and the quotient map $\pi \colon S\rightarrow S/G$ is branched over $r$ points with orders $n_1,\ldots, n_r$. We call $h$ the {\em quotient genus} and  the numbers $n_1,\ldots, n_r$ {\em periods} of the tuple. When $r=0$, so there are no periods, we write $[h;-]$. Riemann's Existence Theorem provides arithmetic and group theoretic conditions which are necessary and sufficient for a tuple $[h; n_1,\ldots, n_r]$ to be the signature of $G$ acting on $S$:

\begin{thm}[Riemann's Existence Theorem]\label{T:RET}
 A finite group $G$ acts on a compact Riemann surface $S$ of genus $\sigma \geq 2$ with signature $[h; n_1,\ldots, n_r]$ if and only if:
\begin{enumerate}
\item The Riemann--Hurwitz formula is satisfied:  $$\sigma =1+|G|(h-1) +\frac{|G|}{2} \sum_{j=1}^{r} \bigg( 1-\frac{1}{n_{j}} \bigg), $$ and

\item there exists a vector $(a_1,b_1,\dots, a_h,b_h,c_1,\dots ,c_r)$ of length $2h+r$  of elements of $G$ called a $[h; n_1,\ldots, n_r]$-generating vector for $G$ which satisfies the following properties:

\begin{enumerate}
\item $G=\langle a_{1},b_{1},a_{2},b_{2},\dots ,a_h,b_h,c_{1},\dots ,c_{r} \rangle$.
\item The order of $c_{j}$ is $n_{j}$ for $1\leq j\leq r$.
\item $\prod_{i=1}^{h} [a_{i} ,b_{i} ] \prod_{j=1}^{r} c_{j} =1_G$, the identity in $G$.
\end{enumerate}

\end{enumerate}
\end{thm}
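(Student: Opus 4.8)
The plan is to prove both directions through the uniformization of Riemann surfaces by Fuchsian groups, thereby reducing the analytic statement to a piece of combinatorial group theory. By the Uniformization Theorem, any compact Riemann surface $S$ of genus $\sigma \ge 2$ is biholomorphic to $\mathbb{H}/K$, where $\mathbb{H}$ is the hyperbolic plane and $K \le \mathrm{PSL}_2(\mathbb{R})$ is a torsion-free cocompact Fuchsian group (a surface group). A faithful holomorphic action of $G$ on $S$ lifts to an overgroup $\Gamma \le \mathrm{PSL}_2(\mathbb{R})$ containing $K$ as a normal subgroup with $\Gamma/K \cong G$, and $\Gamma$ is itself Fuchsian. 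The Fricke--Klein theory attaches to $\Gamma$ a signature $[h; n_1,\ldots,n_r]$, where $h$ is the genus of the quotient orbifold $\mathbb{H}/\Gamma$ and the $n_j$ are the orders of its cone points, together with the standard presentation on generators $a_1,b_1,\ldots,a_h,b_h,c_1,\ldots,c_r$ subject only to $c_j^{n_j}=1$ and the long relation $\prod_i [a_i,b_i]\prod_j c_j = 1$.

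For the forward direction (necessity), I would take $\Gamma$ and $K$ as above and consider the quotient map $q\colon \Gamma \twoheadrightarrow \Gamma/K \cong G$. The images $q(a_i), q(b_i), q(c_j)$ form the desired generating vector: surjectivity of $q$ gives condition (a); the long relation in $\Gamma$ maps to condition (c); and because $K$ is torsion-free while $c_j$ has order $n_j$ in $\Gamma$, no proper power $c_j^k$ with $0<k<n_j$ lies in $K$, so $q(c_j)$ has order exactly $n_j$, giving (b). The Riemann--Hurwitz formula (1) then follows by comparing hyperbolic areas: Gauss--Bonnet gives $\mathrm{area}(\mathbb{H}/K) = 2\pi(2\sigma-2)$ and $\mathrm{area}(\mathbb{H}/\Gamma) = 2\pi\big(2h-2+\sum_j(1-1/n_j)\big)$, while multiplicativity of area in the degree-$|G|$ cover yields the stated identity.

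For the reverse direction (sufficiency), I would run this construction backwards. Starting from the arithmetic and group data, I first note that Riemann--Hurwitz together with $\sigma \ge 2$ forces the orbifold Euler characteristic $2h-2+\sum_j(1-1/n_j)$ to be strictly positive, which is exactly the hyperbolicity condition needed to realize $[h; n_1,\ldots,n_r]$ as the signature of a genuine Fuchsian group $\Gamma$; existence of such $\Gamma$ follows from Poincar\'e's polygon theorem. The generating vector then defines a homomorphism $\phi\colon \Gamma \to G$ by sending each standard generator to its prescribed image: this is well defined because conditions (b) and (c) are precisely the defining relations of $\Gamma$, and it is surjective by (a). The crucial point is that $K := \ker\phi$ is torsion-free --- every element of finite order in $\Gamma$ is conjugate to a power $c_j^k$ with $0<k<n_j$, and since $\phi(c_j)$ has order $n_j$ no such power lies in $K$ --- so $K$ is a surface group and $S := \mathbb{H}/K$ is a compact Riemann surface on which $G = \Gamma/K$ acts with quotient $\mathbb{H}/\Gamma$ of signature $[h; n_1,\ldots,n_r]$; Riemann--Hurwitz recovers $\sigma$ as its genus.

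I expect the main obstacle to be the torsion analysis underpinning condition (b): the entire force of the theorem rests on the fact that requiring each $c_j$ to map to an element of exact order $n_j$ is what guarantees that the kernel $K$ contains no elliptic elements and is therefore an honest surface group rather than a group with cone points. Establishing that the torsion of a Fuchsian group of signature $[h; n_1,\ldots,n_r]$ consists precisely of the conjugates of powers of the elliptic generators --- so that the exact-order condition is both necessary and sufficient for torsion-freeness of the kernel --- is the technical heart of the argument and the step requiring the most care.
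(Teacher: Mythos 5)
The paper does not prove this theorem at all: it is stated as a classical background result (Riemann's Existence Theorem) and used as a black box throughout, so there is no internal proof to compare against. Your proposal is the standard uniformization argument via Fuchsian groups, and it is correct in outline. Both directions are set up properly: lifting the action to an overgroup $\Gamma$ of the surface group $K$ and pushing the canonical presentation of $\Gamma$ through $\Gamma \twoheadrightarrow G$ gives necessity, with Gauss--Bonnet and multiplicativity of hyperbolic area yielding the Riemann--Hurwitz formula; conversely, positivity of the orbifold Euler characteristic (which you correctly extract from Riemann--Hurwitz and $\sigma \ge 2$, and which also rules out the degenerate signatures such as $[0;n]$ and $[0;n_1,n_2]$) lets you realize $\Gamma$ by Poincar\'e's polygon theorem, and the exact-order condition on the $\phi(c_j)$ is precisely what makes $\ker\phi$ torsion-free. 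You have also correctly identified the technical heart: the classification of torsion in a cocompact Fuchsian group as the conjugates of powers of the elliptic generators, which is a genuine theorem (due essentially to Fricke--Klein, with modern proofs via the action on $\mathbb{H}$ and fixed points of elliptic elements) and should be cited or proved rather than asserted. The only minor gloss I would add is in the forward direction: the existence of the lift $\Gamma$ with $K \trianglelefteq \Gamma$ and $\Gamma/K \cong G$ requires knowing that a finite group of conformal automorphisms of $\mathbb{H}/K$ pulls back to a discrete subgroup of $\mathrm{PSL}_2(\mathbb{R})$ normalizing $K$; this follows from the fact that $\mathrm{Aut}(\mathbb{H}/K) \cong N(K)/K$ where $N(K)$ is the normalizer of $K$ in $\mathrm{PSL}_2(\mathbb{R})$, and is worth a sentence. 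With those two citations filled in, your argument is a complete and standard proof of the statement.
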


Outside of a few degenerate cases, see \cite{CARVACHO2021106552}, a tuple $[h; n_1,\ldots, n_r]$  satisfies the necessary arithmetic conditions of Theorem \ref{T:RET} if and only if the numbers $n_1,\dots ,n_r$ are all orders of non-trivial elements of $G$. In particular, for a given $G$, all tuples which satisfy just the arithmetic conditions are easy to compute. We call such tuples {\it potential signatures}. This is in contrast to {\it actual signatures} which are tuples satisfying all conditions of Theorem \ref{T:RET}, and hence are signatures for some $G$ acting on a compact Riemann surface. Since potential signatures are in principle easy to find, and actual signatures not, this leads to the following question: are there groups for which the difference between potential signatures and actual signatures is finite?

Previous work, \cite{Bozlee2014}, showed that asymptotically, as genus increases through the genus spectrum of a group, the proportion of potential signatures to actual signatures goes to $1$, meaning eventually most potential signatures are actual signatures. However, in that same work, the authors exhibited examples of groups for which there are infinitely many potential signatures which are not actual signatures. More recently, in \cite{CARVACHO2021106552}, the authors developed necessary and sufficient conditions for when all but finitely many potential signatures are actual signatures, and it is a corollary of those conditions that all non-abelian finite simple groups have this property.

A natural question, then, is what exactly is the set of  potential signatures which are not actual signatures for particular non-abelian finite simple groups? In the case where the quotient genus is greater than one, the answer is simple: all potential signatures are actual signatures.

\begin{thm} For any non-abelian finite simple group $G$, any potential signature $[h;-]$ (i.e.\ no ramification) or $[h;n_1, \ldots, n_r]$ with $h>1$ and the set of $n_i \in \mathcal{O}(G)$ is an actual signature. \end{thm}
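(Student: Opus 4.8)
The plan is to use Riemann's Existence Theorem (Theorem~\ref{T:RET}), so the task reduces to constructing, for any potential signature $[h;-]$ with $h>1$, or $[h;n_1,\ldots,n_r]$ with $h>1$ and each $n_i\in\mathcal{O}(G)$, an $[h;n_1,\ldots,n_r]$-generating vector $(a_1,b_1,\ldots,a_h,b_h,c_1,\ldots,c_r)$. Since the arithmetic condition (1) is already guaranteed by the definition of potential signature, only condition (2) must be verified: I must produce group elements generating $G$, with the $c_j$ of the prescribed orders $n_j$, whose long commutator-product relation (2)(c) holds. The guiding idea is that the two extra handle generators $a_h,b_h$ available when $h>1$ give enormous freedom: after choosing the $c_j$ to realize the periods and absorbing the resulting product into a single leftover element, I can use one handle to supply generation and the other to correct the product to the identity.

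First I would fix, for each period $n_j$, any element $c_j\in G$ of order $n_j$; these exist precisely because $n_j\in\mathcal{O}(G)$. Let $P=\prod_{j=1}^r c_j$ and, for the handles $a_1,b_1,\ldots,a_{h-1},b_{h-1}$, make a preliminary choice (for instance all equal to $1_G$, contributing trivial commutators). The relation (2)(c) then demands $[a_h,b_h]=\Big(\prod_{i=1}^{h-1}[a_i,b_i]\,P\Big)^{-1}=P^{-1}$, so everything comes down to two sub-problems: realizing the single group element $P^{-1}$ as a commutator $[a_h,b_h]$, and simultaneously arranging that the full tuple generates $G$.

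The main obstacle is the \emph{joint} requirement of a prescribed commutator value and full generation. For the commutator value, I would invoke Ore's conjecture, now a theorem (Liebeck--O'Brien--Shalev--Tiep), that every element of a finite non-abelian simple group is a commutator; hence there exist $x,y\in G$ with $[x,y]=P^{-1}$. If $\langle x,y\rangle=G$ this already finishes the no-period case $[h;-]$ and gives a valid vector when $h>1$. The remaining difficulty is that $x,y$ need not generate $G$; here I would exploit the spare handle pair present because $h>1$ (or, when $h=1$ is excluded, redistribute among the $h\ge 2$ handles). Concretely, I would choose a generating pair $(u,v)$ of $G$ (simple groups are $2$-generated by Steinberg), place it in the first handle, and then solve for the last handle a commutator equation of the form $[a_h,b_h]=w$ where $w$ is the element needed to make the total product trivial after the first handle's commutator $[u,v]$ is accounted for; since $w$ is again an arbitrary element of $G$, Ore's theorem supplies $a_h,b_h$, and generation is already secured by $u,v$ in the first handle.

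I expect the genuinely delicate point to be bookkeeping the interaction between the two goals so that no circularity arises: generation must be guaranteed by elements whose values we are free to fix, while the product-correction must be performed by elements whose values we then solve for via Ore's theorem without disturbing generation. Using $h>1$ to separate these two roles into distinct handle pairs is what makes the argument clean, and it is precisely why the hypothesis $h>1$ (together with the trivial no-period case $[h;-]$, where the relation is just a product of commutators equal to the identity and can be met by setting all but one handle to a generating commutator expression) is needed rather than $h\ge 0$. The case $h=1$ with periods, excluded here, is exactly where this extra freedom disappears and genuine failures occur, consistent with the abstract's $[1;2]$ and $[1;3]$ exceptions.
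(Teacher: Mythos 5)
Your proposal is correct and follows essentially the same strategy as the paper: put a generating pair (Steinberg's two-generation of simple groups) into one handle, set any remaining handles to the identity, choose arbitrary elements of the prescribed orders for the $c_j$, and use Ore's theorem to solve the commutator equation in one leftover handle for the element needed to make the long relation trivial. The only cosmetic difference is which handle plays which role.
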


\begin{proof}
Fix a non-abelian finite simple group $G$. We find elements $a_1, b_1, \ldots, a_h, b_h, c_1, \ldots, c_r$ in $G$  so that $\prod_{i=1}^h [a_i,b_i] \prod_{j=1}^{r} c_i= 1_G$, allowing for there to be no $c_i$ in the case where the signature is $[h;-]$.

It is well known that any finite simple group may be generated by two elements in the group, see \cite{MR143801} 
and \cite[Theorem B]{Asch-Gural}.  Choose $a_h$ and $b_h$ to be any two generators of $G$.  Supposing that $r \geq 1$ additionally choose $c_i$ to be arbitrary elements of the group of order $n_i$ for $1 \leq i \leq r$.   Ore's conjecture, proven in \cite{MR2654085}, states that every element of a non-abelian finite simple group is a commutator. In particular $([a_h,b_h]\cdot c_1 \cdots c_r)^{-1}$ is a commutator, call it $[a_1,b_1]$.  If $h>2$ also let $a_i=b_i=1_G$ for $1<i<h$. Then $(a_1, b_1, \ldots, a_h, b_h, c_1, \ldots, c_r)$ is a generating vector for $G$, and thus by Theorem \ref{T:RET}, $G$ acts with signature $[h;n_1, \ldots, n_r]$ on some compact Riemann surface.\end{proof}

The question of which potential signatures are not actual signatures for $h=0$ and $h=1$ is a harder problem, as we no longer have multiple commutators to work with. In this paper we study the question for the alternating group $A_n$ for $n \geq 5$ and $h =1$. Our work relies heavily on the fact that every element of $A_n$ is a commutator (Ore's conjecture in the case of $A_n$), so the techniques we develop cannot be adapted for when $h=0$. We leave that case for future work as it will require a considerably different approach.  

We prove the following theorem. 

\begin{thm}
    For all $n>6$ every potential signature $[1;n_1, \ldots, n_r]$ is an actual signature for $A_n$. In the cases of $n=5$ and $n=6$, the only failures occur for $[1;2]$ and $[1;3]$, respectively.
\end{thm}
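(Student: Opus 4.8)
The plan is to use Riemann's Existence Theorem (Theorem~\ref{T:RET}) for the case $h=1$, where the generating-vector condition reduces to finding $a, b, c_1, \ldots, c_r \in A_n$ with $\langle a, b, c_1, \ldots, c_r\rangle = A_n$, with $\ord(c_j) = n_j$, and with $[a,b]\, c_1 \cdots c_r = 1$. Equivalently, I need to realize a prescribed product $\prod c_j$ of elements of the given orders as an inverse commutator $[a,b]^{-1}$, while simultaneously ensuring the whole tuple generates $A_n$. Since Ore's conjecture holds for $A_n$, \emph{every} element of $A_n$ is a commutator, so the commutator relation itself is never the obstacle: given any choice of the $c_j$, I can always find $a,b$ with $[a,b] = (c_1\cdots c_r)^{-1}$. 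The real content is therefore the generation requirement, and the strategy is to choose the $c_j$ (all of the correct prescribed orders $n_j \in \mathcal{O}(A_n)$) so cleverly that a pair of them, or one of them together with the resulting $a,b$, already generates $A_n$.

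The key steps I would carry out are as follows. First I would dispose of the degenerate ramification patterns: when $r=0$ the signature $[1;-]$ is handled by picking any two generators $a,b$ of $A_n$ and using $[a,b]=[a,b]$ trivially (here one needs $A_n$ to be a quotient of the genus-one surface group, which the two-generator property supplies). Second, for $r\geq 1$, I would try to arrange that among the $c_j$ themselves there already sit two elements generating $A_n$; this is possible whenever the order set permits a pair of generators, which for $A_n$ with $n$ large is extremely flexible. When the periods are too constrained to generate on their own (e.g.\ all $n_j=2$, forcing each $c_j$ to be an even involution), I would instead fix the $c_j$ arbitrarily of the right orders, set $g = (c_1\cdots c_r)^{-1}$, and then exploit Ore at a finer level: rather than taking \emph{any} commutator equal to $g$, I would choose $a,b$ so that $[a,b]=g$ \emph{and} $\langle a,b\rangle = A_n$. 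The existence of a generating commutator pair for a prescribed value is the technical heart, and for $A_n$ one expects abundant freedom because the set of pairs $(a,b)$ generating $A_n$ has density tending to $1$.

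The step I expect to be the main obstacle is precisely this last one: guaranteeing that the commutator $[a,b]$ can be made to equal the prescribed element $(c_1\cdots c_r)^{-1}$ \emph{while} $a,b$ generate $A_n$, for the small, rigid period sets. This is where the exceptional failures $[1;2]$ for $A_5$ and $[1;3]$ for $A_6$ arise: there $|G|$ is small, the relevant conjugacy class is too thin, and no generating pair $(a,b)$ has its commutator landing in the required class with the required single $c_1$ completing the relation. For these two cases I would verify the non-existence directly, ideally by an explicit finite search over conjugacy-class representatives (a computation of the type the introduction notes is now routine in GAP or Magma), confirming that for $A_5$ with $[1;2]$ and $A_6$ with $[1;3]$ no generating vector exists, whereas for $n>6$ the extra room always allows one.

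For the positive direction with $n>6$, I would make the generation argument uniform by treating a handful of template period types (a single period, two periods, and the all-involutions case) and reducing a general period list to one of these templates by absorbing extra periods into a product that can be completed by a generating commutator pair; the main lemma to isolate would be a statement that for $n>6$ and any $g \in A_n$ there exist $a,b$ with $[a,b]=g$ and $\langle a,b\rangle = A_n$, which combined with a free choice of the $c_j$ closes the argument.
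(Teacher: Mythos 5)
You have correctly located both the real difficulty (generation, not the commutator identity) and the source of the two exceptions, but the proposal has a genuine gap: everything is funneled into the ``main lemma'' that for $n>6$ and any $g\in A_n$ there exist $a,b$ with $[a,b]=g$ and $\langle a,b\rangle=A_n$, and you give no mechanism for proving it. This is not a routine strengthening of Ore: the fiber of the commutator map over a fixed $g$ has size on the order of $|G|$ inside $|G|^2$ pairs, so Dixon-type density statements (``most pairs generate'') say nothing about whether that thin fiber meets the set of generating pairs. The lemma is false for $g=1$ and, as your own discussion of the exceptions shows, false for $n=5,6$ and certain $g$; moreover, for a single period $[1;k]$ it is literally equivalent to the statement being proved. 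So the proposal restates the hardest case of the theorem rather than resolving it.

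The paper avoids ever needing a generating commutator \emph{pair}. The key observation (Proposition \ref{p-conjugates}) is that Theorem \ref{T:RET} only requires the whole tuple $(a,b,c_1,\dots,c_r)$ to generate, and if $[a,b]=\ell_1\ell_2$ arises from two conjugate elements with $\ell_2=b^{-1}\ell_1^{-1}b$, then $\langle a,b,c_1,\dots,c_r\rangle\supseteq\langle \ell_1,\ell_2,c_1,\dots,c_r\rangle$. Bertram's theorem (Theorem \ref{T:Bertram}) supplies $(c_1\cdots c_r)^{-1}$ as a product of two $\ell$-cycles with $\ell$ large and adjustable (to a prime length, with full combined support), and generation of $\langle\ell_1,\ell_2,c_1,\dots,c_r\rangle$ is then certified by transitivity together with Miller's theorem (Theorem \ref{T:Miller}) or Jones's primitivity criterion (Theorem \ref{T:jones}) --- concrete, checkable conditions on explicitly constructed permutations. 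That replacement of ``find a generating pair in the commutator fiber'' by ``find two conjugate long cycles whose product is prescribed, then verify transitivity and primitivity'' is the missing idea; without it, or a proof of your lemma (which would be a substantial theorem in its own right), the argument does not close. A small additional point: $[1;-]$ is not a potential signature here, since Riemann--Hurwitz forces $\sigma=1$ when $h=1$ and there is no ramification, so that case does not need to be handled.
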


The proof of this theorem breaks down into four steps. In Section \ref{S:smallprimes}, we show that provided $2$ or $3$ divides one of the $n_i$, then $[h;n_1,\dots ,n_r]$ is an actual signature for $n\geq 40$. Henceforth, assuming none of the $n_i$ are divisible by $2$ or $3$, in Section \ref{S:twoperiod} we prove the theorem when the action has at least 2 periods for $n\geq 24$, and in Section \ref{S:oneperiod}, when it has 1 period for $n\geq 17$. To finish in Section \ref{S:compute}, we use computational methods to prove the theorem for all $n$ with $5\leq n\leq 40$, which completes the proof. Note by Theorem \ref{T:RET}(1) we cannot have an unramified cover when $h=1$ and $\sigma>1$. We begin in Section \ref{S:background} with  an outline of the proof and statements of core results we use in the proof. Section \ref{S:building} describes how we construct certain elements of $A_n$ which help us in the proof.

\section{Proof Outline and Core Results}\label{S:background}

Throughout the rest of the paper, $\Omega$ will denote the set $\{1, 2 \ldots, n\}$, and henceforth, to avoid confusion, we shall refer to members of the set $\Omega$ as {\it points} and members of the group $A_n$ as {\it elements}. We fix the following further notation and terminology for elements of $A_n$:  

\begin{notation}
\label{not-cycles}
In general, we shall use letters of the Roman alphabet for elements of $A_n$, and letters of the Greek alphabet for individual cycles. Then, for a given $c \in A_n$ with $c =\alpha_1 \dots \alpha_s$ where the $\alpha_i$ are disjoint cycles we define the following:

\begin{itemize}
\item
$\alpha_{i,j}$ is the $j$-th point in the cycle $\alpha_i$ (when the cycle begins with its smallest point).

\item
${\rm Supp} (c)$ denotes the support of $\alpha$ considered as a subset of $\Omega$ and $|{\rm Supp} (c)|$ its size.

\item
${\rm Supp}^c (c)$ denotes the complement of the support of $c$ considered as a subset of $\Omega$ and $|{\rm Supp}^c (c)|$ its size.

\item We say a set of elements in $A_n$ has {\it full support} when the union of the supports of all elements in the set is $\Omega$.

\item  $\text{nc}(c)$ denotes the number of non-trivial cycles in $c$.

\end{itemize}

\end{notation}

\subsection{Overview of Proof}\label{SS:outline}
Each step of the proof follows a similar argument. Since $h=1$, to show that $[1;n_1,\dots ,n_r]$ is an actual signature for $A_n$, by Theorem \ref{T:RET}, we search for elements  $a, b, c_1, \ldots, c_r \in G$ which generate $G$ such that $c_i$ has order $n_i$ and $[a,b] \cdot c_1 \cdots c_r=1_G$.

Most of our work relies on a key theorem in Bertram \cite[Theorem 2]{Bertram}, and see Theorem \ref{T:Bertram} below, which says that any element of $A_n$ can be written as the product of two cycles of length $\ell$ for all $\ell \geq 3n/4$.  As long as $\ell<n-1$ then these two $\ell$-cycles will be conjugate to each other, since elements in $A_n$ with the same cycle structure are all in the same conjugacy class unless the cycle structure (including fixed points) consists of distinct odd numbers. See also \cite{MR3719453} for similar findings on generating $A_n$ with two cycles.

If an element  $g \in A_n$ can be written as a product of two elements of a cycle type such that all elements of that cycle type are in the same conjugacy class, then $g$ is a commutator.  For if $a$ and $a'$ are the two elements in the same conjugacy class, then $a'=b^{-1}ab$ for some $b \in A_n$ and so $g=a^{-1} \cdot a'=[a,b]$.

In particular if we take the element of $A_n$ to be $(c_1 \cdots c_r)^{-1}$, for $c_i$ of the right orders, this product can be written as a commutator via the product of these two conjugate $\ell$-cycles.  This guarantees conditions (2b) and (2c) in Theorem \ref{T:RET}. We summarize.

\begin{prop}\label{p-conjugates}
Suppose that $\ell_1, \ell_2$ are two $\ell$-cycles in $A_n$ with $\ell <n-1$ and $c_1, \dots ,c_r$ elements of orders $n_1,\dots ,n_r$ such that $\ell_1 \ell_2 =(c_1\cdots c_r)^{-1}$ and $A_n=\langle \ell_1, \ell_2, c_1, \dots , c_r\rangle$. Then $[1;n_1,\dots ,n_r]$ is an actual signature for $A_n$.
\end{prop}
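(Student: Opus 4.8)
The plan is to read this off directly from Riemann's Existence Theorem (\thmref{T:RET}): since $h=1$, I need only exhibit a single pair $a_1,b_1$ which, together with the given $c_1,\dots,c_r$, satisfies conditions (2a)--(2c), and the remark preceding the statement already indicates how to manufacture the required commutator. Condition (2b) on the orders of the $c_j$ is immediate from the hypothesis, so the real work lies in establishing (2c) and the generation condition (2a).

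First I would turn the product $\ell_1\ell_2$ into a commutator. Both $\ell_1^{-1}$ and $\ell_2$ are $\ell$-cycles, hence have the same cycle type in $A_n$, namely an $\ell$-cycle together with $n-\ell$ fixed points. The hypothesis $\ell<n-1$ forces $n-\ell\ge 2$, so this cycle type, viewed as a partition of $n$, has at least two parts equal to $1$; in particular it does not consist of distinct odd numbers, and therefore the corresponding $S_n$-conjugacy class does not split in $A_n$. Thus $\ell_1^{-1}$ and $\ell_2$ lie in a single $A_n$-conjugacy class, and I may choose $b\in A_n$ with $\ell_2=b^{-1}\ell_1^{-1}b$.

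Next, set $a_1=\ell_1^{-1}$ and $b_1=b$. Then $[a_1,b_1]=a_1^{-1}b_1^{-1}a_1b_1=\ell_1\,(b^{-1}\ell_1^{-1}b)=\ell_1\ell_2=(c_1\cdots c_r)^{-1}$, the last equality being the hypothesis $\ell_1\ell_2=(c_1\cdots c_r)^{-1}$; hence $[a_1,b_1]\,c_1\cdots c_r=1_{A_n}$, which is condition (2c). For the generation condition (2a), I observe that $\langle \ell_1^{-1},b,c_1,\dots,c_r\rangle$ contains $\ell_1$ and also $b^{-1}\ell_1^{-1}b=\ell_2$, so it contains $\langle \ell_1,\ell_2,c_1,\dots,c_r\rangle=A_n$; thus $(a_1,b_1,c_1,\dots,c_r)$ is a $[1;n_1,\dots,n_r]$-generating vector. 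Finally, condition (1) merely defines $\sigma$, and since $h=1$ with at least one period $n_j\ge 2$ the Riemann--Hurwitz value satisfies $\sigma\ge 2$, so \thmref{T:RET} produces the required action and $[1;n_1,\dots,n_r]$ is an actual signature.

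There is little genuine difficulty here, as the statement essentially repackages the preceding remarks, but the one point that must be handled with care is the conjugacy step: I must invoke the $A_n$-conjugacy criterion rather than mere $S_n$-conjugacy, and it is precisely the two fixed points guaranteed by $\ell<n-1$ that keep the class from splitting. The only other thing to watch is the bookkeeping of inverses in the commutator, namely taking $a_1=\ell_1^{-1}$ rather than $\ell_1$, so that $[a_1,b_1]$ comes out as $\ell_1\ell_2$ and cancels $(c_1\cdots c_r)^{-1}$ exactly.
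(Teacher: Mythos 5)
Your proof is correct and is essentially the argument the paper itself gives (the paper proves this proposition via the discussion in \S\ref{SS:outline}: $\ell<n-1$ guarantees the two $\ell$-cycles lie in a single $A_n$-conjugacy class since the cycle type has repeated fixed points, so $\ell_1\ell_2$ is a commutator $[a,b]$ with $a=\ell_1^{-1}$, and generation plus Theorem~\ref{T:RET} finishes it). Your handling of the non-splitting criterion and the inverse bookkeeping matches the paper's intent exactly.
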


In fact, $\ell_1, \ell_2$ need not be cycles in Proposition \ref{p-conjugates}, just conjugate elements of $A_n$, which is something we will use in part of the proof of Theorem \ref{T:1-k}.

What is left to check, then, is that these elements do generate $A_n$, and we do this using two methods.  One method uses a theorem of Miller  \cite[pg.\ 25]{Miller} which guarantees a transitive group is at least $A_n$ as long as it contains a cycle of prime order for certain primes. The other method uses a well-known generalization  of a classical theorem attributed to Jordan  (\cite{MR1503659} and \cite{MR1503635}): a subgroup of $S_n$ which is a primitive permutation group (see the next section for a definition of {\em primitive}) and which contains a cycle with $m$ fixed points for some  $m \geq 3$ must be $A_n$ or $S_n$ (see \cite{Jones} for a proof of the generalization). This reduces the problem to proving that the subgroup we have constructed is primitive (it will have a cycle with at least $3$ fixed points by the way we construct it).  

\subsection{Core Results}\label{SS:priors}

Since our proofs use several theorems from other papers to verify transitivity and primitivity of the groups we generate,  we include the statement of these theorems here, and provide proofs for several other facts we will need later. We begin with the theorems  we use to create the commutator of $a$ and $b$.

\begin{thm}[Bertram, Theorem 2, \cite{Bertram}]\label{T:Bertram} Given an element  $c \in A_n$ and $\ell>0$ an integer satisfying $$\frac{ |\textnormal{Supp}(c )| + \textnormal{nc}(c)}{2} \leq \ell \leq n.$$ Then there exist two $\ell$-cycles whose product is $c$.  \end{thm}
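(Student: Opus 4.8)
My plan is to read the stated inequality as the numerical shadow of an explicit construction, guided by the Euler--characteristic count for a factorization. If $\sigma\tau=c$ with $\sigma,\tau$ being $\ell$-cycles whose common support has size $s$ and $\langle\sigma,\tau\rangle$ acts on those points with total genus $g\ge0$, then comparing the cycle counts of $\sigma$, $\tau$, and $c^{-1}$ yields $\ell=s+g-\tfrac12\big(|\text{Supp}(c)|-\text{nc}(c)\big)$, an integer since $|\text{Supp}(c)|-\text{nc}(c)$ is even for $c\in A_n$. Letting $s$ range over $[\,|\text{Supp}(c)|,n\,]$ and $g$ over $\mathbb{Z}_{\ge0}$ sweeps exactly $\tfrac12(|\text{Supp}(c)|+\text{nc}(c))\le\ell\le n$: the lower end is the transitive genus-$0$ factorization on $\text{Supp}(c)$, and $\ell=n$ forces $s=n$ and $g=\tfrac12(|\text{Supp}(c)|-\text{nc}(c))$. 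So the real task is existence, which I would settle by building the factorizations by hand.

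First I would set up base blocks. Since a product of two cycles is even, a single cycle is usable as a block only in odd length; the even-length cycles of $c$ occur in even number (as $c\in A_n$), and I would pair them. I would show: (A) an odd $m$-cycle $\gamma$ is a product of two $\ell$-cycles supported on $\text{Supp}(\gamma)$ for every $\tfrac{m+1}{2}\le\ell\le m$, via the overlap identity $(1\,2\cdots\ell)(\ell\cdots m)=(1\,2\cdots m)$ at the bottom, $\gamma=\gamma^{2}\gamma^{-1}$ at the top, and a uniform overlap construction (the two cycles sharing $2\ell-m$ points) in between; and (B) a product $\delta_1\delta_2$ of two disjoint even cycles of lengths $p,q$ is a product of two $\ell$-cycles on $\text{Supp}(\delta_1\delta_2)$ for every $\tfrac{p+q}{2}+1\le\ell\le p+q$. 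In each block the minimal $\ell$ equals the sum of $\tfrac{(\text{length})+1}{2}$ over its cycles and the maximal $\ell$ equals the block's support size.

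Next I would run an induction on the blocks, gluing one at a time with a bridging transposition so that the two factors stay single cycles. Suppose inductively $c'=\sigma'\tau'$ with $\sigma',\tau'$ equal-support $\ell'$-cycles, and let $\beta=\rho_1\rho_2$ be the next block (an odd cycle or an even pair) on a disjoint support $T$, with $\rho_1,\rho_2$ equal-length $\ell''$-cycles sharing their full support. Choosing $x$ in the common support of $\sigma',\tau'$ and $y\in T$, I set $\sigma=\sigma'\rho_1\,(x\,y)$ and $\tau=(x\,y)\,\tau'\rho_2$; disjoint supports commute, so $\sigma\tau=\sigma'\tau'\rho_1\rho_2=c'\beta$, each bridge splices two disjoint cycles into one $(\ell'+\ell'')$-cycle, and the invariant $\text{Supp}(\sigma)=\text{Supp}(\tau)$ persists. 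As the block lengths $\ell''$ vary over their integer intervals, the accumulated $\ell=\sum_i\ell_i$ fills every integer in $\big[\tfrac12(|\text{Supp}(c)|+\text{nc}(c)),\,|\text{Supp}(c)|\big]$. To reach $\ell$ up to $n$ I would pad: given equal-support $\ell$-cycles, a point $w$ in their support, and an unused point $z$, replacing $\sigma\mapsto\sigma\,(w\,z)$ and $\tau\mapsto(w\,z)\,\tau$ gives equal-support $(\ell+1)$-cycles with the same product; iterating until all $n$ points are spent carries $\ell$ to $n$.

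The main obstacle I anticipate is the bookkeeping that legitimizes the two gluing moves at every $\ell$: I must guarantee the shared points $x,y,w$ always exist (which is the point of carrying $\text{Supp}(\sigma)=\text{Supp}(\tau)$ as a loop invariant), that each bridge merges rather than splits, and that the non-support points used en route cancel so the product is exactly $c$ rather than some larger permutation. Secondarily, the uniform single-block constructions realizing the whole genus range $0\le g\le\tfrac{m-1}{2}$ in (A), and all of (B), are the only places demanding a genuinely explicit, case-checked family of two-cycle factorizations; the rest is formal manipulation of disjoint supports.
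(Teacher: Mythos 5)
The paper does not prove this statement: it is imported verbatim as Theorem~2 of Bertram's paper \cite{Bertram}, so there is no in-paper argument to compare yours against. What you have written is, in outline, a reconstruction of Bertram's own strategy --- split $c$ into blocks consisting of single odd cycles and pairs of even cycles, factor each block into two cycles over its admissible range of lengths, splice blocks together with a bridging transposition using $(x\,y)(x\,y)=1$, and pad with unused points to push $\ell$ up to $n$. The genus bookkeeping at the start correctly explains where the bound $\tfrac12(|\mathrm{Supp}(c)|+\mathrm{nc}(c))$ comes from, and the splicing and padding moves are sound: a transposition joining two disjoint cycles merges them into one cycle of the combined length, and the interval of achievable $\ell$ is the sum of the per-block integer intervals, which is again a full integer interval.

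Two points need repair before this is a proof. First, and most substantially, steps (A) and (B) --- that an odd $m$-cycle is a product of two $\ell$-cycles on its support for every $\tfrac{m+1}{2}\le\ell\le m$, and that a disjoint pair of even cycles of lengths $p,q$ is a product of two $\ell$-cycles on its support for every $\tfrac{p+q}{2}+1\le\ell\le p+q$ --- are asserted with only the two endpoints of (A) verified. These single-block factorizations are precisely the combinatorial heart of Bertram's theorem (his Lemmas), and case (B) in particular is not a routine interpolation between obvious endpoints; without explicit constructions the proof is incomplete exactly where the content lies. Second, your loop invariant ``$\mathrm{Supp}(\sigma)=\mathrm{Supp}(\tau)$'' is false already at the base case: in $(1\,2\,\cdots\,\ell)(\ell\,\cdots\,m)=(1\,2\,\cdots\,m)$ the two factors have different supports. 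What you actually need, and what the constructions do provide, is the weaker invariant that $\sigma$ and $\tau$ have nonempty common support (they share $2\ell-m$ points in (A)), so that the bridge point $x$ and the padding point $w$ can be chosen in $\mathrm{Supp}(\sigma)\cap\mathrm{Supp}(\tau)$ and each of the two factors gets merged or extended simultaneously. With that weakening, and with (A) and (B) actually carried out, the argument closes.
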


 Bertram additionally proves (\cite[Lemma 3]{Bertram}) that the value $(|\text{Supp}(c)| + \text{nc}(c))/2$ is bounded above by $\lfloor 3n/4 \rfloor$. So for every $\ell \geq \lfloor 3n/4 \rfloor$ and any $c \in A_n$ we can write $c$ as the product of two cycles of order $\ell$. In the latter part of the proof of Theorem \ref{T:1-k}, we are unable to use the cycles given by Bertram, and so we instead use the following.
 
\begin{thm}[Xu, Lemma, pg. 339 \cite{Xu}]\label{T:Xu} For $n$ odd, let $a=(1\ 2)(3\ 4\ \ldots \ n-1)$, and for $n$ even, let $a=(1\ 2)(3 \ 4\ \ldots\  n)$. Then every element of $A_n$ can be expressed as a product of two permutations which are both conjugate to $a$ and whose cycles of length $2$ have at least one point in common.
\end{thm}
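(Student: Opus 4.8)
The plan is to exploit the fact that the asserted property is a conjugacy invariant, thereby reducing the problem to one representative per cycle type. First I would observe that because $a$ contains a cycle of even length—the transposition $(1\ 2)$—its $S_n$-conjugacy class does not split in $A_n$; hence ``conjugate to $a$'' is equivalent to ``having the same cycle type as $a$,'' and an element is conjugate to $a$ in $A_n$ if and only if it is conjugate to $a$ in $S_n$. Now suppose $g = xy$ with $x,y$ conjugate to $a$ and with the length-$2$ cycles of $x$ and $y$ sharing a value $i$. Conjugating by any $w \in S_n$ gives $wgw^{-1} = (wxw^{-1})(wyw^{-1})$, where $wxw^{-1}$ and $wyw^{-1}$ again have the cycle type of $a$ and their transpositions share the value $w(i)$. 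Thus decomposability in the sense of the statement is invariant under $S_n$-conjugation, so it depends only on the cycle type of $g$.

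Since every element of $A_n$ of a fixed cycle type is $S_n$-conjugate to any chosen representative of that type, it suffices to produce, for each cycle type of an even permutation, one representative $g$ together with an explicit pair $x = \tau_1\gamma_1$ and $y = \tau_2\gamma_2$—each a transposition $\tau_j$ times a disjoint long cycle $\gamma_j$ of the length prescribed by the parity of $n$—such that $\tau_1,\tau_2$ share a value and $xy = g$. I would carry this out constructively: fix the shared value to be $1$, write $\tau_1 = (1\ p)$ and $\tau_2 = (1\ q)$, and use the remaining freedom in $p$, $q$, $\gamma_1$, and $\gamma_2$ to build up the cycles of $g$, beginning from the single-long-cycle case and then grafting on the shorter cycles and fixed points of $g$ by threading them through $\gamma_1$ and $\gamma_2$.

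The main obstacle is the global bookkeeping in this construction. Unlike Bertram's theorem (\ref{T:Bertram}), where the two factors are honest $\ell$-cycles, here each factor must simultaneously be a single long cycle of exactly the prescribed length, be disjoint from its own transposition, and leave the correct fixed-point set—one fixed point when $n$ is odd, none when $n$ is even—all while the two transpositions overlap and the product equals the target $g$. Getting the cycle-length and parity accounting to close, uniformly across every cycle type and across the two parity cases for $n$, is the delicate step; I would expect to verify it by tracking how multiplying a long cycle by a transposition splits or merges cycles, and by handling the $n$ odd and $n$ even cases in parallel, with the extra fixed point in the odd case absorbing the length discrepancy.
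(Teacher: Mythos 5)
Your opening reduction is correct: since $a$ has a cycle of even length, its $S_n$-class does not split in $A_n$, and the property of being a product of two elements of this cycle type with overlapping transpositions is manifestly invariant under $S_n$-conjugation, so it suffices to treat one representative per cycle type. But that reduction is essentially free, and everything after it is deferred: you say you ``would carry this out constructively'' and that the ``global bookkeeping\dots is the delicate step,'' without exhibiting the construction. Producing, for \emph{every} partition of $n$ into the cycle type of an even permutation, a pair $x=\tau_1\gamma_1$, $y=\tau_2\gamma_2$ with $\gamma_j$ a single cycle of the exact prescribed length, $\tau_1,\tau_2$ overlapping, and $xy=g$, is not a routine verification --- it is the entire content of the lemma, and your proposal contains no scheme (uniform in the partition) for doing it, nor any argument that the cycle-length and fixed-point accounting can always be made to close. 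As written, this is a plan for a proof rather than a proof.

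For comparison: the paper does not prove this statement either; it imports it from Xu, whose argument (as the paper notes) is constructive by induction on $n$ rather than by a case analysis over cycle types. The induction sidesteps exactly the bookkeeping you flag, by reducing an arbitrary element of $A_n$ to one of a smaller alternating group and patching the two factors accordingly. One further point worth registering if you pursue your route: the statement only requires the two transpositions to share \emph{at least} one value, and there are elements (those conjugate to $(1\ 2\ 3)$) for which the two transpositions are forced to be \emph{identical}; your normalization $\tau_1=(1\ p)$, $\tau_2=(1\ q)$ must therefore allow $p=q$, and any uniform construction has to accommodate such degenerate cases --- a sign that a single clean per-cycle-type formula may not exist.
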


In Section \ref{S:oneperiod} we need to add a condition on the cycles in Theorem \ref{T:Xu}: we will require that the cycles of length $2$ share exactly one point. Xu's proof is constructive by induction on $n$, and working through that proof we see that the only permutations which must be written with their $2$-cycles identical are those conjugate to $(1\ 2\ 3)$, which we are able to avoid.  \\

In order to prove that the elements we have found generate $A_n$ we use the following theorems.

\begin{thm}[Miller pg. 25, \cite{Miller}]\label{T:Miller} If a transitive group of degree $n$ contains a cycle of prime order $p$, where $p$ satisfies the condition $n/2 <p \leq n -3$, it must be either alternating or symmetric.\end{thm}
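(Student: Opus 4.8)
The plan is to reduce the statement to the generalization of Jordan's theorem quoted above — a primitive subgroup of $S_n$ containing a cycle with at least three fixed points must be $A_n$ or $S_n$ — by first establishing primitivity. Write $G\le S_n$ for the transitive group, and let $\sigma\in G$ be the $p$-cycle with $n/2<p\le n-3$. The cycle $\sigma$ fixes exactly $n-p\ge 3$ values, so once primitivity is known, Jordan's theorem applies directly to the single cycle $\sigma$ and forces $G\ge A_n$. The entire burden of the argument therefore lies in proving that $G$ is primitive, and this is precisely where the hypothesis $p>n/2$ enters.

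To prove primitivity, I would argue by contradiction: suppose $G$ preserves a nontrivial block system with blocks of common size $d$, where $1<d<n$ and $d\mid n$. Since there are at least two blocks, $d\le n/2<p$. Now $\sigma$ permutes the blocks, and because $p$ is prime the orbits of $\langle\sigma\rangle$ on the block set have size $1$ or $p$. The key observation is that any block $B$ fixed setwise by $\sigma$ must be fixed pointwise: the restriction $\sigma|_B$ has order dividing $p$, but a nonidentity permutation of order $p$ requires a $p$-cycle and hence at least $p$ moved points, whereas $|B|=d<p$. Consequently every value moved by $\sigma$ lies in a block whose $\langle\sigma\rangle$-orbit has size $p$.

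Fixing such a moved block $B$, its orbit $B,\sigma B,\dots,\sigma^{p-1}B$ consists of $p$ pairwise disjoint blocks, and I claim every value in each of them is moved by $\sigma$: if $\sigma$ fixed some $y\in\sigma^i B$, then $y=\sigma y\in\sigma^{i+1}B$ would lie in two distinct blocks, which is impossible. Thus each orbit of moved blocks contributes exactly $pd$ moved values, and since $\sigma$ is nontrivial there is at least one such orbit. Summing over all of them shows that the number of values moved by $\sigma$ is a positive multiple of $pd$; but $\sigma$ moves exactly $p$ values, forcing $pd\le p$, that is, $d\le 1$, a contradiction. Hence no nontrivial block system exists and $G$ is primitive.

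With primitivity in hand the proof concludes in one line: $\sigma$ is a single cycle with $n-p\ge 3$ fixed points, so by the generalization of Jordan's theorem $G$ is $A_n$ or $S_n$. I expect the primitivity step to be the main obstacle, and it is worth noting that the two hypotheses enter there in complementary ways — $p>n/2$ forces $d<p$ so that setwise-fixed blocks are fixed pointwise, while $p\le n-3$ supplies the three fixed points that Jordan's theorem demands — so the argument is quite rigid in how the bounds on $p$ are used.
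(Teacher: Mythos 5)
Your argument is correct, but note that the paper does not prove this statement at all --- it is quoted verbatim from Miller and used as a black box, so there is no in-paper proof to match. What you have done is give a self-contained derivation of Miller's theorem from the generalization of Jordan's theorem (Theorem~\ref{T:jones}), and every step checks out: a setwise-fixed block of size $d<p$ is fixed pointwise because a nontrivial permutation of prime order $p$ moves at least $p$ points; the moved points of $\sigma$ are exactly the union of the blocks in size-$p$ orbits, so their number is a positive multiple of $pd$, forcing $d=1$; and then $p\le n-3$ hands Jordan's theorem its three fixed points. It is worth observing that your primitivity step closely parallels the paper's own Theorem~\ref{T:primitive} (a transitive group containing an $\ell$-cycle with $\ell>n/2$ and $\gcd(n,\ell)=1$ is primitive): since $n/2<p\le n-3$ forces $p\nmid n$ and hence $\gcd(n,p)=1$, you could have invoked that theorem verbatim and reduced your proof to two lines. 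What your version buys instead is independence from the coprimality bookkeeping (Corollary~\ref{Co:blocks} and the divisibility of block length into $|H_c|$), replacing it with the more elementary observation that a prime-order permutation cannot act nontrivially on a set of size less than $p$; what the citation to Miller buys the authors is that Miller's original statement needs no appeal to the (much later) classification-free generalization of Jordan's theorem. Either way, your write-up is a valid and complete proof of the quoted result.
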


\begin{thm}[Jones,  Corollary 1.3 \cite{Jones}]\label{T:jones} Let $G$ be a primitive permutation group of finite degree $n$, containing a cycle with $m$ fixed points. Then $G\geq A_n$ if $m\geq 3$.
\end{thm}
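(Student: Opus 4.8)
The plan is to deduce the statement from the complete classification of finite primitive permutation groups that contain a \emph{cycle}, meaning an element that acts as a single cycle on its support and fixes every other value; once that list is in hand, the corollary follows by auditing how many points such a cycle can fix in each family. Write $c$ for the cycle in the hypothesis, let $\Phi$ be its set of $m$ fixed points and $\Delta = \Omega \setminus \Phi$ its support, so that $c$ is an $(n-m)$-cycle on $\Delta$. The key structural input, assembled from classical work of Jordan and Burnside together with results of Feit and M\"uller and, ultimately, the classification of finite simple groups, is that a primitive group $G \leq \mathrm{Sym}(\Omega)$ of degree $n$ containing a nontrivial cycle must be one of: (a) $A_n$ or $S_n$; (b) a group with $n = p$ prime and $C_p \leq G \leq \mathrm{AGL}_1(p)$; (c) a group with $n = (q^d-1)/(q-1)$ and $\mathrm{PGL}_d(q) \leq G \leq \mathrm{P\Gamma L}_d(q)$ acting on the points of projective space; or (d) one of finitely many explicit almost simple groups of small degree, such as $\mathrm{PSL}_2(11)$ and the Mathieu groups $M_{11}, M_{12}, M_{23}, M_{24}$ in their natural actions.

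The second step is the fixed-point audit. In family (b) the only cycles are the $p$-cycles $x \mapsto x + b$, fixing $0$ points, and the $(p-1)$-cycles $x \mapsto ax + b$ with $a$ a primitive root, fixing exactly $1$ point; any $x \mapsto ax + b$ with $\mathrm{ord}(a) < p-1$ is a product of several equal-length cycles and hence is not a cycle at all. In family (c) the relevant elements are Singer cycles and their close relatives: already for $d = 2$, the group $\mathrm{PGL}_2(q)$ on its $q+1$ points contains a $(q-1)$-cycle fixing exactly the two points of a split torus, and the higher-dimensional and field-automorphism elements likewise fix at most $2$ points. In family (d) one checks, case by case from the known cycle types, that every genuine cycle fixes at most $2$ points. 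Thus in every case other than (a) a cycle fixes at most $2$ points, so the hypothesis $m \geq 3$ forces $G \in \{A_n, S_n\}$, that is $G \geq A_n$, as claimed; the split-torus example moreover shows that the bound $m \geq 3$ is sharp.

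The main obstacle is the classification of primitive groups containing a cycle itself, which is deep and rests on the classification of finite simple groups; the corollary we want is comparatively soft given that input. Within the argument, the genuinely technical part is the audit in families (c) and (d): one must know the precise cycle structure of Singer cycles and torus elements in $\mathrm{PGL}_d(q)$, and of the order-maximal elements in the sporadic examples, in order to certify the uniform bound of $2$ fixed points. I would expect the case $m \geq 3$ to be cleaner than the full classification precisely because the prime-length instance is already classical: a primitive group containing a $p$-cycle with $p \leq n-3$ (equivalently, a prime cycle fixing at least three points) lies in $A_n$ or $S_n$ by Jordan's theorem, the transitive form of which is recorded as Theorem~\ref{T:Miller}. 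The substance of the general statement is exactly the passage from prime-length cycles, where powers of $c$ conveniently remain cycles, to arbitrary-length cycles, where they need not, and it is this passage that forces reliance on the full classification rather than on an elementary induction.
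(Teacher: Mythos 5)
First, a point of framing: the paper does not prove this statement at all --- Theorem~\ref{T:jones} is imported verbatim as Corollary~1.3 of \cite{Jones}, with the paper explicitly deferring to that reference for the proof. So your proposal can only be measured against the cited source, and there your architecture is indeed the right one: Jones's proof proceeds exactly as you describe, by classifying the finite primitive groups containing a cycle (resting, via Feit, M\"uller, and the classification of $2$-transitive groups, on the classification of finite simple groups) and then checking that outside $A_n$ and $S_n$ every such cycle fixes at most two points. Your closing diagnosis --- that the prime-length case is classical Jordan theory (cf.\ Theorem~\ref{T:Miller}) and that the hard content is the passage to composite-length cycles, whose powers need not be cycles --- is also accurate.

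There is, however, a concrete gap in your sketch: the classification you assert in step one is incomplete, and as stated it is false. You omit the infinite affine family: for every prime power $q$ and every $d\geq 1$, the primitive group $\mathrm{AGL}_d(q)$ of degree $q^d$ contains a Singer-type cycle, namely multiplication by a generator $\lambda$ of $\mathbb{F}_{q^d}^{\times}$ viewed as an $\mathbb{F}_q$-linear map, which is a single $(q^d-1)$-cycle fixing exactly the zero vector. For instance, $\mathrm{AGL}_3(2)$ of degree $8$ contains a $7$-cycle yet fits none of your families (a)--(d): it is affine rather than almost simple, $8$ is not prime, and $\mathrm{AGL}_3(2)$, of order $1344$, does not lie between $\mathrm{PGL}_2(7)$ and $\mathrm{P\Gamma L}_2(7)$ (the degree-$8$ projective case). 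Jones's Theorem~1.2 includes these affine groups among the $k=1$ examples. The omission happens not to damage the conclusion, since these cycles fix only one point, but a complete audit must include them. Relatedly, your claim that in family (c) the ``field-automorphism elements likewise fix at most $2$ points'' is false for the elements themselves --- the Frobenius on the projective line over $\mathbb{F}_{p^2}$ fixes the $p+1\geq 3$ points of the subline --- and is rescued only because such an element is not a cycle for $p>2$ (it is a product of $(p^2-p)/2$ transpositions); the boundary case $p=2$, where it \emph{is} a transposition fixing $3$ points, is absorbed by $\mathrm{P\Gamma L}_2(4)\cong S_5\geq A_5$. So the fixed-point audit must be restricted throughout to genuine cycles, and that restriction is precisely where the case-by-case work of the cited proof lives; as written, your step two asserts rather than performs it.
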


We remind the reader of the definition of primitive, and set some terminology to be used below.
\begin{defn} A  nonempty subset $B$ of $\Omega$ is called a {\it block} if every element of $G$ either preserves $B$ or sends all points of $B$ to points outside of $B$. The {\it length} of a block is $|B|$. A group $G$ is {\it primitive} if it is transitive and its only blocks are the {\it trivial blocks}, namely those with $|B|=1$ or $B=\Omega$.   \end{defn}

Notice that to prove primitivity, we first need to prove that a group is transitive. We primarily do this using the following lemma and its corollary.

\begin{lem}\label{L:transitive} Let $c_1 =\alpha_1 \dots \alpha_t$ and $c_2 =\beta_1\dots \beta_u$ with $u\geq t$ be two elements of $A_n$ such that 
\begin{enumerate}
    \item 
    for $i\leq t-1$, $\beta_i$ contains a point from $\alpha_i$ and $\alpha_{i+1}$, and 
    \item 
    for $i\geq t$, $\beta_i$ contains some point from ${\rm Supp} (c_1)$.
\end{enumerate}
Then $\langle c_1 ,c_2 \rangle$ is transitive on ${\rm Supp} (c_1) \cup {\rm Supp} (c_2)$.
\end{lem}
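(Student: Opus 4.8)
The plan is to prove transitivity by an orbit-growing argument driven entirely by the overlap structure of the cycles. The elementary fact I would invoke repeatedly is \emph{within-cycle transitivity}: if a value $x$ lies in the cycle $\alpha_i$ of $c_1$, then the powers $c_1^k(x)$ sweep out every value of $\alpha_i$, so the whole cycle $\alpha_i$ is contained in the $\langle c_1,c_2\rangle$-orbit of $x$; the same holds for $c_2$ and each $\beta_j$. Consequently, as soon as a cycle meets the orbit $O$ of a fixed starting value, that entire cycle lies in $O$, and whenever two cycles share a value, membership of one in $O$ forces membership of the other.

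Fix a value $v_0$ in $\alpha_1$ and let $O$ be its orbit under $\langle c_1,c_2\rangle$. First I would use hypothesis (1) to show, by induction on $j$, that $\alpha_1,\ldots,\alpha_j$ and $\beta_1,\ldots,\beta_{j-1}$ all lie in $O$ for $1\le j\le t$. The base case $\alpha_1\subseteq O$ is immediate from within-cycle transitivity. For the inductive step, assuming $\alpha_j\subseteq O$: by (1) the cycle $\beta_j$ shares a value with $\alpha_j$, hence $\beta_j\subseteq O$; and $\beta_j$ also shares a value with $\alpha_{j+1}$, hence $\alpha_{j+1}\subseteq O$. Carrying this up to $j=t-1$ yields $\alpha_t\subseteq O$, so that ${\rm Supp}(c_1)=\alpha_1\cup\cdots\cup\alpha_t\subseteq O$, together with $\beta_1,\ldots,\beta_{t-1}\subseteq O$.

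Next I would dispatch the remaining cycles of $c_2$ using hypothesis (2): for each $i\ge t$, the cycle $\beta_i$ contains a value lying in ${\rm Supp}(c_1)\subseteq O$, so $\beta_i\subseteq O$ again by within-cycle transitivity. Combined with the previous paragraph this gives ${\rm Supp}(c_2)=\beta_1\cup\cdots\cup\beta_u\subseteq O$. Hence $O$ contains all of ${\rm Supp}(c_1)\cup{\rm Supp}(c_2)$, which is precisely the assertion that $\langle c_1,c_2\rangle$ is transitive on this set.

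I do not anticipate a genuine obstacle: the statement is essentially a connectivity claim, and conditions (1) and (2) are engineered so that the bipartite ``intersection graph'' on the cycles $\{\alpha_i\}$ and $\{\beta_j\}$ is connected, with (1) supplying the backbone chain $\alpha_1,\beta_1,\alpha_2,\beta_2,\ldots,\alpha_t$ and (2) attaching the leftover cycles $\beta_t,\ldots,\beta_u$. The only real care needed is bookkeeping: checking that (1) accounts for $\beta_1,\ldots,\beta_{t-1}$ while (2) accounts for $\beta_t,\ldots,\beta_u$, so that every cycle of $c_1$ and $c_2$ is reached, and keeping the induction indices aligned so the backbone is traversed without gaps.
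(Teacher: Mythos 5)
Your proof is correct and follows essentially the same route as the paper's: both arguments exploit that each cycle is a single orbit under the element containing it, and then chain the cycles together via the shared values guaranteed by conditions (1) and (2) — the paper phrases this as an induction on $t$ applying suitable powers of $c_1$ and $c_2$, while you phrase it as growing the orbit of a fixed starting value, but these are the same argument. No gaps.
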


\begin{proof}



We first show by induction on $t$ that all points of ${\rm Supp}(c_1)$ lie in a single $\langle c_1,c_2\rangle$--orbit. The case $t=1$ is immediate. Assume the claim holds for $t=l$; that is, all points of ${\rm Supp}(\alpha_1\cdots\alpha_l)$ lie in one orbit of $\langle c_1,c_2\rangle$. To extend this to $t=l+1$, take a point $f\in \alpha_l$ and apply an appropriate power of $c_1$ to map  $f$ to a common point $g$ in $\beta_l$. Then apply an appropriate power of $c_2$ to map $g$ to a  common point in $\alpha_{l}$.

Now, any point of $c_2$ will be in the same orbit as any point of $c_1$ since, by construction, every point from ${\rm Supp} (c_2)$ is included in a cycle of $c_2$ with some point from $c_1$. Thus $\langle c_1 ,c_2 \rangle$ is transitive on ${\rm Supp} (c_1) \cup {\rm Supp} (c_2)$. \end{proof}

\begin{cor}\label{C:transitive} If two $\ell$-cycles  have full support between them, and they share at least one point, then the subgroup they generate is transitive.
\end{cor}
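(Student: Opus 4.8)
The plan is to deduce this directly from Lemma \ref{L:transitive} by specializing to the case where both elements are single cycles, that is, $t = u = 1$ in the notation of that lemma. First I would write the two $\ell$-cycles as $c_1 = \alpha_1$ and $c_2 = \beta_1$. With $t = 1$, the first hypothesis of Lemma \ref{L:transitive} (which concerns indices $i \leq t-1 = 0$) is vacuously satisfied, and the second hypothesis (which concerns $i \geq t = 1$, so here only $i = 1$) reduces to the single requirement that $\beta_1$ contain a value from $\text{Supp}(c_1)$. This is precisely the assumption that the two cycles share at least one value, so both hypotheses are met.

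Having verified the hypotheses, I would invoke Lemma \ref{L:transitive} to conclude that $\langle c_1, c_2 \rangle$ acts transitively on $\text{Supp}(c_1) \cup \text{Supp}(c_2)$. The phrase ``full support between them'' means exactly that $\text{Supp}(c_1) \cup \text{Supp}(c_2) = \Omega$, so transitivity on the union is the same as transitivity on all of $\Omega$, which is the desired claim.

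Because this is an immediate specialization of the preceding lemma, I expect no genuine obstacle. The only points requiring a word of care are confirming that ``full support'' is read as the union of the two supports exhausting $\Omega$ (so that the conclusion of the lemma upgrades to transitivity on the whole permutation domain), and noting that the ordering convention $u \geq t$ in the lemma is harmless here since both quantities equal $1$.
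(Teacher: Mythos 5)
Your proof is correct and matches the paper's intent: the corollary is stated without proof precisely because it is the immediate specialization of Lemma \ref{L:transitive} to $t=u=1$ that you describe, with condition (1) vacuous and condition (2) supplied by the shared value. Your reading of ``full support between them'' as $\mathrm{Supp}(c_1)\cup\mathrm{Supp}(c_2)=\Omega$ is also the intended one.
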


And finally, in cases where we do not use Miller's theorem directly, we need to prove primitivity using a theorem inspired by Miller's work.

\begin{thm}\label{T:primitive} If a transitive permutation group $G$ of degree $n$ contains an $\ell$-cycle so that $\gcd(n,\ell)=1$ and $\ell > \frac{n}{2}$, then the group must be primitive.\end{thm}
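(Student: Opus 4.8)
The plan is to argue by contradiction: assume $G$ is imprimitive and extract a divisibility condition on $\ell$ that is incompatible with $\gcd(n,\ell)=1$. Since $G$ is transitive, any nontrivial block $B$ generates a full block system partitioning $\Omega$ into $m$ blocks of a common size $d$, where $d\mid n$ and, as $d$ is a proper divisor of $n$, $1<d\le n/2$. The $\ell$-cycle $c$ lies in $G$, so it permutes these blocks; write $\bar c$ for the induced permutation of the block set and $S={\rm Supp}(c)$, so $|S|=\ell$ and $c$ acts as a single $\ell$-cycle, in particular transitively, on $S$. Note also that $\ell<n$, since $\gcd(n,\ell)=1$ with $n>1$ rules out $\ell=n$, so $c$ genuinely has fixed points and ${\rm Supp}^c(c)\neq\emptyset$.

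The key step is to show that every block lies entirely inside $S$ or entirely inside ${\rm Supp}^c(c)$. If a block $B$ is moved by $\bar c$, then $B$ can contain no fixed point of $c$: a fixed point $x\in B$ would satisfy $x=c(x)\in B\cap c(B)$, forcing $B=c(B)$ because two blocks of the system are equal or disjoint. Hence every moved block is contained in $S$. If instead $B$ is fixed by $\bar c$, then $c(B\cap S)=c(B)\cap c(S)=B\cap S$, so $B\cap S$ is a $c$-invariant subset of $S$; because $c$ is transitive on $S$, the only such subsets are $\emptyset$ and $S$. The possibility $B\cap S=S$ would give $\ell=|S|\le|B|=d\le n/2$, contradicting $\ell>n/2$ --- this is precisely where the hypothesis $\ell>\tfrac{n}{2}$ is used. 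Therefore $B\cap S=\emptyset$, so every fixed block lies in ${\rm Supp}^c(c)$.

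With this dichotomy in hand, $S$ is a union of blocks, so $d\mid\ell$. But $d\mid n$ together with $\gcd(n,\ell)=1$ forces $\gcd(d,\ell)=1$, whence $d=1$, contradicting $d>1$. Thus no nontrivial block system exists, and $G$ is primitive.

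I expect the only genuine subtlety to be the fixed-block case: one must recognize $B\cap S$ as a $c$-invariant subset of $S$ and invoke the transitivity of a single cycle on its support to force $B\cap S\in\{\emptyset,S\}$, and then see clearly that $\ell>n/2$ is exactly what excludes the degenerate configuration $S\subseteq B$. Everything else --- setting up the block system, the observation that a moved block cannot meet the fixed points, and the final coprimality contradiction --- is routine, with the coprimality hypothesis doing the decisive work only at the very end.
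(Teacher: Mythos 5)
Your proof is correct. It shares the paper's overall strategy---assume a nontrivial block system, split according to how blocks meet ${\rm Supp}(c)$, use transitivity of a single cycle on its support, and finish with the coprimality of $\ell$ and $n$---but the decomposition is organized differently. The paper fixes one block $B$ and cases on whether $B\subseteq{\rm Supp}(c)$: if so, it cites Corollary \ref{Co:blocks} to get $|B|\mid\ell$ alongside $|B|\mid n$; if not, the fixed point in $B$ forces $c$ to stabilize $B$, whence ${\rm Supp}(c)\subseteq B$, and then $|B|>n/2$ together with $|B|\mid n$ gives $|B|=n$. You instead run the dichotomy over the entire block system (moved blocks lie inside ${\rm Supp}(c)$, fixed blocks are disjoint from it), conclude that ${\rm Supp}(c)$ is a union of blocks, and read off $d\mid\ell$ by counting, using $\ell>n/2$ only to exclude ${\rm Supp}(c)\subseteq B$. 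Your version buys two things: it avoids appealing to the classification of blocks of a cyclic group, and it explicitly disposes of the subcase of a fixed block with $B\cap{\rm Supp}(c)=\emptyset$, which the paper's second case passes over (the step ``all of ${\rm Supp}(c)$ must be contained in $B$'' tacitly assumes $B\cap{\rm Supp}(c)\neq\emptyset$). The paper's version is marginally shorter because the divisibility $|B|\mid\ell$ comes for free from the cited corollary.
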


In order to prove Theorem  \ref{T:primitive} we first cite several known facts about primitive permutation groups which we use in the proof, see \cite{Wielandt} for more details. 

If $B$ is a block of $G$, then so is $B^g = \{b^g : b \in B\}$ for every $g \in G$, and the set of all such $B^g$ is called a \emph{complete block system} for $G$.
When $G$ is transitive, all members of a complete block system have the same size,
as they are permuted transitively by $G$, and so their common size divides the
degree of $G$ (as a permutation group).




\begin{proof}[Proof of Theorem 2.8]
Let $c$ denote the $\ell$-cycle in $G$. Consider any complete block system for $G$, with $n/k$ blocks of equal size $k$.
Now if every block contains either  
(i) only points moved by $c$, or  
(ii) only points fixed by $c$,  
then the union of the blocks of type (i) has size $\ell$, while the union of the blocks of 
type (ii) has size $n-\ell$. But then $k$ divides both $\ell$ and $n-\ell$, which is 
impossible since $\gcd(\ell, n-\ell)=\gcd(\ell,n)=1$.

Hence some block $B$ contains both a point $f$ moved by $c$ and a point $g$ 
fixed by $c$. But then $B$ is preserved by $c$, since it contains $g$, and so 
$B$ contains all $\ell$ points of the $\langle c\rangle$-orbit of $f$.
This implies that $|B|\ge \ell > n/2$, and so $|B|=n$. Hence $G$ is primitive.



\end{proof}

\section{Building Elements in $A_n$}\label{S:building}

Much of our work will depend on being able to (1) create elements with large support in $A_n$ and (2) generate transitive subgroups of $A_n$. Therefore, before the main proof, we focus on how to construct elements which satisfy these conditions.


\begin{lem}\label{lem-support}
Let $n_1$ be the order of an element of $A_n$ for $n\geq 12$. Then unless $n_1$ is prime and greater than $\lfloor \frac{n}{2} \rfloor$, we can find an element $c \in A_n$ of order $n_1$ so that $|{\rm Supp} (c)| >  \frac{2n}{3}$.
\end{lem}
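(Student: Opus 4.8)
The plan is to realize $n_1$ as the order of an element built from a small ``core'' together with as many short padding cycles as will fit, so that the total support lands in the window $(2n/3,\,n]$. Write $n_1=p_1^{e_1}\cdots p_m^{e_m}$. The natural starting point is the minimal-support element $c_0$ of order $n_1$, a product of disjoint cycles of lengths $p_1^{e_1},\dots,p_m^{e_m}$; since fusing two prime-power cycles into one longer cycle only increases length ($p^{a}q^{b}\ge p^{a}+q^{b}$ for $p^a,q^b\ge 2$), its support $s_0=\sum_i p_i^{e_i}$ is minimal, and realizability of $n_1$ in $A_n$ forces $s_0\le n$. The whole argument then reduces to padding $c_0$ with disjoint cycles whose lengths divide $n_1$ (so the order is unchanged) and whose parities combine to keep the element even, filling $\Omega$ past $2n/3$.

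I would split into cases by the finest available padding increment. \emph{If $n_1$ is even}, I pad with $2$-cycles: adjoining $j$ disjoint transpositions changes the support by $2j$ and the parity by $(-1)^j$, so choosing the largest even $j$ with $s_0+2j\le n$ produces an even element of order $n_1$ with support at least $n-3$, and $n-3>2n/3$ for all $n\ge 12$. \emph{If $n_1$ is odd with smallest prime factor $q<n/3$}, I pad $c_0$ with $q$-cycles (each of odd length, hence even, so parity is automatic); packing greedily gives support exceeding $n-q$, and $q<n/3$ yields $n-q>2n/3$. The same packing, applied to a single $q$-cycle as core, settles the prime case $n_1=q\le\lfloor n/2\rfloor$ as well: if $q\le n/3$ one fits at least three $q$-cycles with support $>n-q>2n/3$, while if $n/3<q\le n/2$ one fits exactly two, with support $2q>2n/3$. (The prime $n_1=2$ falls under the even case.)

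The remaining, and genuinely delicate, case is $n_1$ odd and \emph{composite} with smallest prime factor $q\ge n/3$; here every available padding cycle already has length $\ge n/3$, so fine padding is impossible and I must instead argue that the core $c_0$ alone overshoots $2n/3$. The key is a counting restriction. Each prime-power factor $p_i^{e_i}$ is at least $q\ge n/3$, and since $s_0=\sum_i p_i^{e_i}\le n$ there can be at most two such factors (three distinct prime powers each $\ge n/3$ would sum to more than $n$). Moreover no factor can have exponent $e_i\ge 2$, since that would force $p_i^{e_i}\ge (n/3)^2>n$ once $n\ge 12$, exceeding the available points. Hence $n_1=p_1p_2$ is a product of two distinct odd primes with $n/3\le p_1<p_2$, so $s_0=p_1+p_2\ge 2\lceil n/3\rceil+1>2n/3$ and the core already has the required support.

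Collecting the cases exhausts every $n_1\in\mathcal{O}(A_n)$ not excluded by the hypothesis; indeed the excluded primes $n_1>\lfloor n/2\rfloor$ are exactly those admitting only a single cycle of support $n_1\le n$, for which packing cannot reach $2n/3$. I expect the main obstacle to be precisely the large-smallest-prime-factor composite case: verifying that the exponent and factor-count bounds force $n_1$ to be a product of two distinct primes, and hence that the minimal element is already spread over more than two-thirds of $\Omega$, is where the constraint $n\ge 12$ and the two-thirds threshold interact most tightly.
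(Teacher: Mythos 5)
Your proof is correct and follows essentially the same strategy as the paper's: build a core from one cycle per prime-power factor of $n_1$, pad greedily with disjoint cycles of length dividing $n_1$, and observe that in the problematic case where the smallest prime factor is at least $n/3$ the core itself (forced to be $p_1p_2$ with two distinct odd primes) already has support exceeding $2n/3$. One small slip to fix: when $n_1$ is even the core $c_0$ contains exactly one even-length cycle and is therefore an \emph{odd} permutation, so you must adjoin an odd number $j$ of transpositions to land in $A_n$, not an even one --- your claimed support bound of $n-3$ already absorbs this correction, so nothing else changes.
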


\begin{proof} We construct an element of order $n_1$ with the appropriate sized support as follows. Given the prime factorization $p_1^{a_1}p_2^{a_2} \cdots p_m^{a_m}$ of $n_1$, with $p_i<p_{i+1}$, we first construct an element with one cycle of order $p^{a_i}$ for each $i$, adding an additional 2-cycle if $p_1=2$. We then fill in additional cycles of lengths dividing $n_1$ (coupled with additional $2$-cycles when adding cycles of even length) until $|{\rm Supp}^c (c)| <p_1$ (or $|{\rm Supp}^c (c)| <n-4$ if $p_1=2$). Since $c$ is only made up of cycles whose lengths divide $n_1$, including at least one cycle of length $p_i^{a_i}$ for each $1 \leq i \leq m$, we see that $c$ has order $n_1$. Also note that if $n_1=p_1$ is prime, then $c$ has at least two $p_1$-cycles since $n_1\leq \lfloor \frac{n}{2} \rfloor$. In particular, by construction, $c$ will always contain at least two disjoint cycles.

We now show $|{\rm Supp} (c)| >  \frac{2n}{3}$. We know that each of the cycles in $c$ has length bounded below by $p_1$ so, since $c$ contains at least two cycles,  we know that $|{\rm Supp} (c) |\geq 2p_1$. If it were true that $|{\rm Supp} (c) |\leq  \frac{2n}{3} $ then $2p_1\leq \frac{2n}{3}$ and so $p_1\leq n/3$. However, this violates the construction of $c$. Specifically, if $p_1$ is odd, we could append an extra $p_1$-cycle, contradicting the assumption that $|{\rm Supp}^c (c) |<p_1$, and if $p_1=2$,  then since we are assuming $n\geq 12$, the number of points not in the support of $c$ is at least $n/3 \geq 12/3=4$, contradicting the fact that $|{\rm Supp}^c (c)| <n-4$. \end{proof}

The following lemma, and in particular the construction of specific elements of $A_n$ in the proof, are essential for proving the existence of generating vectors for signatures with arbitrarily long periods. They are also used heavily in our computations for $n<40$.

\begin{lem}

\label{lem-trans} Given any two elements $b_1, b_2 \in A_n$, there exist elements $c_1, c_2 \in A_n$ with the same cycle types as $b_1$ and $b_2$, respectively, so that $\langle c_1,c_2 \rangle$ is transitive on ${\rm Supp} (c_1)\cup {\rm Supp} (c_2)$. 

\end{lem}

\begin{proof}
Let $b_1 = \alpha_1 \dots \alpha_t$ and $b_2 = \beta_1 \dots \beta_u$, and, without loss of generality, suppose that $u \ge t$. We let $c_1 = b_1$ and let $c_2 = \delta_1 \delta_2 \dots \delta_u$, with $u$ disjoint cycles $\delta_i$, all having the same length as $\beta_i$, be defined as follows:


$$\delta_1=(\alpha_{1,1} \ \alpha_{2,1} \ \gamma_{1,1}\ \dots \ \gamma_{1,s_1} )$$
$$\delta_2=(\alpha_{2,2} \  \alpha_{3,1} \ \gamma_{2,1}\ \dots \ \gamma_{2,s_2}  )$$
$$\delta_3= (\alpha_{3,2} \  \alpha_{4,1} \ \gamma_{3,1}\ \dots \  \gamma_{3,s_3}  )$$  
$$\vdots$$
$$\delta_{t-1}= (\alpha_{t-1,2}\   \alpha_{t,1}\  \gamma_{t-1,1}\ \dots \ \gamma_{t-1,s_{t-1}} )$$
$$\delta_{t} =(\alpha_{\beta_t}\ \gamma_{t,1}\ \dots \ \gamma_{t,s_t} )$$
$$\vdots$$
$$\delta_{u} =(\alpha_{\beta_u}\  \gamma_{u,1} \ \dots \ \gamma_{u,s_u} )$$
where $\alpha_{\beta_t},\dots, \alpha_{\beta_u}\in {\rm Supp} (c_1)$ but none of which have already appeared in a cycle of $c_2$, and the $\gamma_{i,j}$ are any points chosen from $\Omega$ so that the cycles of $c_2$ remain disjoint. (We will impose more restrictions on the $\gamma_{i,j}$ when we use this construction in Section \ref{S:twoperiod}.) By construction, $c_1$ and $c_2$ satisfy the conditions of Lemma \ref{L:transitive}, and hence $\langle c_1, c_2\rangle$ is transitive on ${\rm Supp} (c_1)\cup {\rm Supp} (c_2)$.


\end{proof}

\section{The case when periods are divisible by small primes}\label{S:smallprimes}

We run into difficulties when there is a period that is divisible by either $2$ or $3$, so we deal with those cases first. We will need the following proposition.

\begin{prop}
\label{prop-prime}
For any integer $n\geq 40$, there is always a prime between $\lfloor \frac{3n}{4}\rfloor +3$ and $n-3$.  
\end{prop}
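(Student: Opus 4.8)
The plan is to reduce the statement to an explicit short-interval prime existence theorem, since the required interval has length of order $n/4$ and its endpoints have ratio tending to $4/3$ as $n \to \infty$. Writing $a = \lfloor \tfrac{3n}{4}\rfloor + 3$ and $b = n-3$, I want a prime $p$ with $a \le p \le b$. Because $b/a \to 4/3 > 6/5$, a classical theorem of Nagura---that for every real $x \ge 25$ the open interval $(x, \tfrac{6x}{5})$ contains a prime---is comfortably strong enough to do the job for all but the smallest values of $n$ in range.

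First I would apply Nagura's theorem with $x = \lfloor \tfrac{3n}{4}\rfloor + 2$, noting that $x \ge 25$ whenever $n \ge 40$. This produces a prime $p$ with $x < p < \tfrac{6x}{5}$. The lower inequality gives $p > \lfloor \tfrac{3n}{4}\rfloor + 2$, hence $p \ge \lfloor \tfrac{3n}{4}\rfloor + 3 = a$, so the left endpoint is automatic. For the right endpoint I would show that $\tfrac{6x}{5} \le n-2$, which forces the integer $p$ to satisfy $p \le n-3 = b$. Using $\lfloor \tfrac{3n}{4}\rfloor \le \tfrac{3n}{4}$, the inequality $\tfrac{6}{5}(\lfloor \tfrac{3n}{4}\rfloor + 2) \le n-2$ rearranges to $6\lfloor \tfrac{3n}{4}\rfloor \le 5n - 22$. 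A short case analysis on $n \bmod 4$ (the binding case being $n \equiv 0$, which needs $n \ge 44$) shows this holds for all $n \ge 44$.

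It remains to treat the four values $40 \le n \le 43$ by hand, and here I would simply exhibit a prime. In each of these cases one has $a \le 35$ and $b \ge 37$, so the interval $[a,b]$ contains the prime $37$, and the statement holds. Combining this finite check with the range $n \ge 44$ handled by Nagura completes the proof.

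The main obstacle is essentially the choice of input from analytic number theory: Bertrand's postulate (ratio $2$) is far too weak, so one must invoke an explicit result guaranteeing a prime in an interval of ratio strictly below $4/3$ that is valid down to moderate $x$; Nagura's ratio $6/5$ leaves ample room. Everything after that is routine, namely verifying that the floor term and the additive shifts by $3$ do not spoil either endpoint, together with the trivial four-value base case.
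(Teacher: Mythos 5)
Your proof is correct and follows essentially the same route as the paper: both reduce the statement to Nagura's theorem on primes in $(x, \tfrac{6x}{5})$ and finish with a finite check. The only difference is bookkeeping — your sharper case analysis on $n \bmod 4$ leaves just $40 \le n \le 43$ to verify by hand, whereas the paper's cruder estimate requires checking $40 \le n \le 65$ directly.
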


\begin{proof}
In \cite{Nagura} it is shown that for $\displaystyle m\geq 25$ there is always a prime between $m$ and $\frac{6m}{5}$.  Letting $m=\lfloor \frac{3n}{4}\rfloor +3$ we can determine a similar bound in terms of $n$. Specifically, when $m\geq 25$ we have $n\geq 30$. Since there is always a prime between $m$ and $6m/5$ when $m\geq 25$ it follows that there is always a prime between $m=\lfloor \frac{3n}{4}\rfloor +3$ and $$\frac{6m}{5}=\frac{6}{5}\cdot \left(\left\lfloor \frac{3n}{4}\right\rfloor +3 \right)=\left\lfloor \frac{18n}{20}\right\rfloor +\frac{18}{5}$$ when $n\geq 30$. Observe that $$n-3\geq  \left\lfloor \frac{18n}{20}\right\rfloor +\frac{18}{5}$$ for $n\geq 61$, so it follows that there is always a prime between $\lfloor \frac{3n}{4}\rfloor +3$ and $n-3$ when $n\geq 61$. For values between $40$ and $60$ we can check directly that there is a prime between $\lfloor \frac{3n}{4}\rfloor+3$ and $n-3$.   
\end{proof}

\begin{thm}
\label{T:smallprimes}
Suppose that $[1;n_1,\dots ,n_r]$ is a potential signature for $A_n$ for $n\geq 40$ where one of the $n_i$ is divisible by either $2$ or $3$. Then $[1;n_1,\dots ,n_r]$ is an actual signature for $A_n$.
\end{thm}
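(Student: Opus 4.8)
The plan is to realize the signature through Proposition~\ref{p-conjugates}: it suffices to produce elements $c_1,\dots,c_r$ of orders $n_1,\dots,n_r$ together with two conjugate $\ell$-cycles $\ell_1,\ell_2$ satisfying $\ell_1\ell_2=(c_1\cdots c_r)^{-1}$ and $\langle \ell_1,\ell_2,c_1,\dots,c_r\rangle=A_n$. The hypothesis $n\geq 40$ enters only to supply, via Proposition~\ref{prop-prime}, a prime $\ell$ with $\lfloor 3n/4\rfloor+3\leq \ell\leq n-3$. Such an $\ell$ is simultaneously: (i) at most $n-3<n-1$, so that two $\ell$-cycles of the same cycle type are automatically conjugate in $A_n$ (their common cycle structure has at least three fixed points, hence is not a set of distinct odd numbers) and their product is a commutator; (ii) at least $\lfloor 3n/4\rfloor$, so that Theorem~\ref{T:Bertram} together with Bertram's Lemma~3 lets us write \emph{any} element of $A_n$ as a product of two $\ell$-cycles; and (iii) a prime in the range $n/2<\ell\leq n-3$ demanded by Theorem~\ref{T:Miller}. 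Thus, whatever target element $g$ we decide on, we may factor $g=\ell_1\ell_2$ into conjugate $\ell$-cycles, and the resulting group will contain a prime cycle in Miller's range.

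With the factorization free, the entire problem collapses to \emph{generation}, which I would reduce to transitivity. Set $g=(c_1\cdots c_r)^{-1}$ and factor $g=\ell_1\ell_2$ by Bertram. Each $\ell_i$ has support of size $\ell>\tfrac{n}{2}$, so $2\ell>n$; hence if the union ${\rm Supp}(\ell_1)\cup{\rm Supp}(\ell_2)$ is all of $\Omega$, the two cycles necessarily share a value, and Corollary~\ref{C:transitive} makes $\langle\ell_1,\ell_2\rangle$ transitive. A transitive group containing the prime $\ell$-cycle $\ell_1$ with $n/2<\ell\leq n-3$ is $A_n$ or $S_n$ by Theorem~\ref{T:Miller}, hence $A_n$. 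The one thing that forces the union of supports to be all of $\Omega$ is that $g$ have \emph{full support}: if $g$ moves every value, then every value lies in ${\rm Supp}(g)\subseteq{\rm Supp}(\ell_1)\cup{\rm Supp}(\ell_2)$.

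Therefore the real content is to build $c_1,\dots,c_r$ of the prescribed orders whose product is fixed-point-free on $\Omega$. This is exactly where a period divisible by $2$ or $3$ is indispensable. Say $2\mid n_1$ (the case $3\mid n_1$ is parallel, with $3$-cycles in place of pairs of transpositions). Using the support-building idea of Lemma~\ref{lem-support} and the chained placement of cycles in Lemma~\ref{lem-trans}, I would lay down $c_2,\dots,c_r$ and the ``essential'' prime-power cycles of $c_1$, and then \emph{fill the remaining values of $\Omega$} with extra $2$-cycles, adjoined in pairs to stay in $A_n$ and having no effect on the order since $2\mid n_1$, until no value is left fixed. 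Because transpositions move values two at a time, and because $3$-cycles are available in the divisible-by-$3$ case, this filling can be carried out down to the last value and with the correct parity; for periods coprime to $6$ the smallest adjustable block has length at least $5$, which is precisely the obstruction that makes the later sections necessary.

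The main obstacle I anticipate is this last construction: guaranteeing, uniformly in $r$ and in the arithmetic of $n_1,\dots,n_r$, that the $c_i$ can be placed with overlapping supports so that their \emph{product} --- not merely their union of supports --- is fixed-point-free, while keeping each $c_i$ of its exact order. The flexibility provided by the filler $2$- or $3$-cycles is what makes this possible, and the bound $n\geq 40$ is needed only so that the prime $\ell$ of Proposition~\ref{prop-prime} exists; the smaller cases are deferred to the computations of Section~\ref{S:compute}.
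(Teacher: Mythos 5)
Your skeleton matches the paper's: factor $(c_1\cdots c_r)^{-1}$ via Bertram into two conjugate cycles whose length is the prime supplied by Proposition~\ref{prop-prime}, get generation from Theorem~\ref{T:Miller}, and conclude with Proposition~\ref{p-conjugates}. But the step you defer as ``the main obstacle'' --- arranging the $c_i$ so that their \emph{product} is fixed-point-free --- is not merely unproved; it is impossible in general, so the reduction to full support of the product is the wrong reduction. Concretely, take the signature $[1;2]$ (which the theorem covers, since $2$ divides the single period) with $4\nmid n$: an element of order $2$ in $A_n$ is a product of an even number of disjoint transpositions and therefore moves a multiple of $4$ values, so no such element has full support. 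The same obstruction hits $[1;3]$ whenever $3\nmid n$. Your filler argument (``adjoin $2$-cycles in pairs until no value is left fixed'') runs into exactly this parity wall.

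The paper's proof gets around this by demanding less of the product and more of the cycles. Lemma~\ref{lem-support} only guarantees $|{\rm Supp}^c(c)|\leq 3$ when a period is divisible by $2$ or $3$ (that is the precise role of the hypothesis), and the remaining $\leq 3$ fixed points are then absorbed into the two cycles themselves: Bertram's construction first produces cycles of length $\lfloor 3n/4\rfloor$, and since the prime $p$ of Proposition~\ref{prop-prime} satisfies $p\geq\lfloor 3n/4\rfloor+3$, each cycle gets \emph{extended} by at least three values, which are chosen to be the fixed points of the product. The union ${\rm Supp}(\ell_1)\cup{\rm Supp}(\ell_2)$ is then all of $\Omega$ even though the product is not fixed-point-free, and transitivity plus Miller go through as you describe. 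That extension step --- putting the leftover fixed points into the enlarged Bertram cycles rather than eliminating them from the product --- is the idea your proposal is missing. (The paper also splits into $r=1$ and $r\geq 2$, using Lemma~\ref{lem-trans} to chain $c_1$ and $c_2$ in the latter case so that transitivity does not rest on the two $\ell$-cycles alone; that is a secondary difference.)
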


\begin{proof} Using Proposition \ref{p-conjugates}, it suffices to find two $\ell$-cycles $\ell_1, \ell_2 \in A_n$ with $\ell <n-1$, and elements $c_1, \dots ,c_r$ of orders $n_1,\dots ,n_r$ such that $\ell_1 \ell_2 =(c_1\cdots c_r)^{-1}$ and $A_n=\langle \ell_1, \ell_2, c_1, \dots , c_r\rangle$.

First suppose that $r=1$, so the signature is of the form $[1;k]$. By assumption, $k$ is divisible by $2$ or $3$, and so, using the construction from the proof of Lemma \ref{lem-support}, we can build an element $c_1\in A_n$ of order $k$ with ${\rm Supp} (c_1) \geq n-3$. Using Theorem \ref{T:Bertram}, there are two conjugate cycles $\ell_1$ and $\ell_2$ of length $\lfloor3n/4\rfloor$ whose product is $c_1^{-1}$. Note that the combined size of the supports of $\ell_1$ and $\ell_2$ is at least $n-3$ as the product must stabilize all the points in $c_1$.  By Proposition \ref{prop-prime} there is a prime $p$ between $\lfloor 3n/4\rfloor +3$ and  $n-3$. Now, using the construction in \cite{Bertram}, we can add additional points to each of the cycles $\ell_1$ and $\ell_2$ to create cycles of length $p$ whose combined support has size $n$. Specifically, we add points to the cycles $\ell_1$ and $\ell_2$ by first adding the fixed points of $c_1$ and then additional points as needed, observing that since $p\geq \lfloor 3n/4 \rfloor +3$, at least three points will always be added.


Now suppose that $r\geq 2$ and, without loss of generality, assume that $n_1$ is the period divisible by $2$ or $3$. Using the construction given in Lemma \ref{lem-support}, we may assume the cycle type of $c_1$ is such that ${\rm Supp} (c_1) \geq n-3$. Next, using Lemma \ref{lem-trans}, we may construct $c_1$ and $c_2$ so that $\langle c_1 , c_2\rangle$ acts transitively on ${\rm Supp} (c_1)\cup {\rm Supp} (c_2)$, a set of size at least $n-3$. We then choose arbitrary elements of orders $n_3,\dots ,n_r$ for $c_3,\dots ,c_r$, and we imitate the construction of $\ell_1$ and $\ell_2$ as for the case $[1;k]$ above. Specifically, start with cycles $\ell_1$ and $\ell_2$ of lengths $\lfloor 3n/4 \rfloor$ whose product is $(c_1\cdots c_r)^{-1}$ and then add points to ensure they have prime length and full support between them.

In either case, the group  $\langle \ell_1 ,\ell_2, c_1, c_2, 
\ldots, c_r\rangle$,  or $\langle \ell_1 ,\ell_2, c\rangle$ for $r=1$, is clearly transitive by Lemma \ref{L:transitive}, and since $p>n/2$ and $p<n-3$, by Theorem \ref{T:Miller} they in fact generate all of $A_n$. Thus by Proposition \ref{p-conjugates}, $[1;n_1,\dots ,n_r]$ is an actual signature. \end{proof}

When $r=1$, the previous proof relied only on the size of the support of $c$ being at least $n-3$. Later, in the one period case, we will use that corollary more generally, so we state it here.

\begin{cor}
\label{C:largesupport}
For $n \geq 40$, if $A_n$ contains an element of order $k$ whose support has size at least $n-3$, then $[1;k]$ is an actual signatures for $A_n$.
\end{cor}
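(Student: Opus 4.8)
The plan is to recognize that this corollary is exactly the $r=1$ case of Theorem \ref{T:smallprimes} with the divisibility hypothesis removed. Reading that proof, the assumption that some period is divisible by $2$ or $3$ enters in precisely one place: it feeds Lemma \ref{lem-support} to manufacture an element $c\in A_n$ of order $k$ with $|{\rm Supp}(c)|\geq n-3$. Every subsequent manipulation refers only to $c$ and the size of its support. Here such a $c$ is supplied by hypothesis, so I expect the remainder of that argument to transcribe with no change, and the natural way to present the corollary is simply to run the first paragraph of the proof of Theorem \ref{T:smallprimes} starting from the given $c$.

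Concretely, I would first fix $c\in A_n$ of order $k$ with $|{\rm Supp}(c)|\geq n-3$, and invoke Proposition \ref{prop-prime} (valid since $n\geq 40$) to fix a prime $p$ with $\lfloor 3n/4\rfloor+3\leq p\leq n-3$. Next I would check the Bertram bound: by Bertram's Lemma 3 we have $(|{\rm Supp}(c)|+{\rm nc}(c))/2\leq \lfloor 3n/4\rfloor\leq p\leq n$, so Theorem \ref{T:Bertram} produces two $\lfloor 3n/4\rfloor$-cycles $\ell_1,\ell_2$ with $\ell_1\ell_2=c^{-1}$, whose combined support is at least $|{\rm Supp}(c)|\geq n-3$ since their product must move every value that $c$ moves. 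Using Bertram's explicit construction I would then extend $\ell_1$ and $\ell_2$ to $p$-cycles of combined support exactly $n$, appending first the at most three fixed points of $c$ and then any further values needed; the inequality $p\geq\lfloor 3n/4\rfloor+3$ guarantees there is room to absorb these (at least three values are appended). Since $p\leq n-3<n-1$, the resulting two $p$-cycles lie in a common conjugacy class of $A_n$.

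It then remains only to show $\langle \ell_1,\ell_2,c\rangle=A_n$. Transitivity follows from Lemma \ref{L:transitive} applied to the cycle structure of the construction, exactly as in Theorem \ref{T:smallprimes}. The group contains the $p$-cycle $\ell_1$, whose order $p$ is prime and satisfies $n/2<p\leq n-3$, so Theorem \ref{T:Miller} forces the group to be alternating or symmetric; since $\ell_1,\ell_2,c\in A_n$ it must equal $A_n$. Finally, $\ell_1$ and $\ell_2$ are conjugate with $\ell_1\ell_2=c^{-1}$, so Proposition \ref{p-conjugates} with the single period $n_1=k$ yields that $[1;k]$ is an actual signature. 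I do not expect a substantive obstacle here: the only delicate point is that the extension of $\ell_1,\ell_2$ to $p$-cycles can simultaneously preserve the identity $\ell_1\ell_2=c^{-1}$, achieve full support, and keep prime cycle length in Miller's range — but this is precisely the content of Bertram's construction as already deployed in Theorem \ref{T:smallprimes}, so the real work has in effect already been carried out there.
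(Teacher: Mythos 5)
Your proposal is correct and is exactly the argument the paper intends: the corollary is stated without a separate proof precisely because, as the preceding remark notes, the $r=1$ case of Theorem \ref{T:smallprimes} used the divisibility hypothesis only to manufacture an element of order $k$ with support of size at least $n-3$, and your write-up transcribes that same chain (Proposition \ref{prop-prime}, Bertram's construction extended to $p$-cycles of full combined support, Lemma \ref{L:transitive}, Theorem \ref{T:Miller}, Proposition \ref{p-conjugates}) starting from the hypothesized $c$.
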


\section{The case $[1;n_1, n_2, \ldots, n_r]$ for $r\geq 2$}\label{S:twoperiod}

In this section, we shall show that if $r\geq 2$, then a potential signature $[1;n_1, n_2, \ldots, n_r]$ for $A_n$ is always an actual signature for sufficiently large $n$ provided none of the $n_i$ are divisible by $2$ or $3$. We use the following proposition to reduce the problem to proving a certain subgroup is transitive. 

\begin{prop}
\label{prop:trans-support}
Suppose that $[1;n_1, n_2, \ldots, n_r]$ is a potential signature for $A_n$ with $n\geq 24$ and $r \geq 2$. If there exist elements $c_1,\dots, c_r \in A_n$ of orders   $n_1, n_2, \ldots, n_r$ respectively such that
$\langle c_1, \dots, c_r\rangle$ is a transitive subgroup of $A_n$, then $[1;n_1, n_2, \ldots, n_r]$  is an actual signature for $A_n$.
\end{prop}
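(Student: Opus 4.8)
The plan is to reduce everything to Proposition~\ref{p-conjugates}. We are already handed elements $c_1,\dots,c_r$ of the correct orders $n_1,\dots,n_r$ whose product generates a transitive subgroup, so it suffices to write the single element $c:=(c_1\cdots c_r)^{-1}\in A_n$ as a product $\ell_1\ell_2$ of two cycles of a common length $\ell<n-1$ and to check that adjoining $\ell_1,\ell_2$ upgrades the transitive subgroup to all of $A_n$. Proposition~\ref{p-conjugates} then delivers the conclusion, since two equal-length cycles with $\ell<n-1$ are conjugate and hence have a commutator as product.

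First I would fix the cycle length to be a prime. I claim there is a prime $p$ with $\lfloor 3n/4\rfloor\le p\le n-3$ for every $n\ge 24$: for $n\ge 34$ this follows from the Nagura estimate underlying Proposition~\ref{prop-prime} (which puts a prime below $\tfrac{9n}{10}\le n-3$), and the finitely many cases $24\le n\le 33$ are checked directly. Choosing such a $p$ has two payoffs: it exceeds $n/2$ yet is at most $n-3<n-1$, and being prime and larger than $n/2$ it is automatically coprime to $n$. Since the Bertram threshold $(|{\rm Supp}(c)|+\text{nc}(c))/2$ is at most $\lfloor 3n/4\rfloor\le p$ (as recorded after Theorem~\ref{T:Bertram}), Theorem~\ref{T:Bertram} applied with $\ell=p$ produces two $p$-cycles $\ell_1,\ell_2$ with $\ell_1\ell_2=c$.

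Next I would establish generation. Put $G=\langle \ell_1,\ell_2,c_1,\dots,c_r\rangle$. Because $G$ contains the transitive subgroup $\langle c_1,\dots,c_r\rangle$, it is transitive on $\Omega$, and it contains the cycle $\ell_1$ of prime order $p$ with $n/2<p\le n-3$. Miller's theorem (Theorem~\ref{T:Miller}) then forces $G$ to be $A_n$ or $S_n$, and since $G\le A_n$ we conclude $G=A_n$. (Alternatively, $\gcd(p,n)=1$ and $p>n/2$ make $G$ primitive by Theorem~\ref{T:primitive}, after which the $n-p\ge 3$ fixed points of $\ell_1$ give $G\ge A_n$ through Theorem~\ref{T:jones}.) With $\ell_1,\ell_2$ cycles of length $p<n-1$ satisfying $\ell_1\ell_2=(c_1\cdots c_r)^{-1}$ and $\langle \ell_1,\ell_2,c_1,\dots,c_r\rangle=A_n$, Proposition~\ref{p-conjugates} shows $[1;n_1,\dots,n_r]$ is an actual signature.

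The part I expect to be the genuine obstacle—and the likely source of the threshold $n\ge 24$ rather than something smaller—is guaranteeing a usable cycle length in the narrow band $[\lfloor 3n/4\rfloor,\,n-3]$, whose width is only about $n/4-2$. One must ensure this band meets the primes (equivalently, meets an integer coprime to $n$, which is all Theorem~\ref{T:primitive} really needs) for all $n\ge 24$; the asymptotic range is handled by Nagura-type prime-gap bounds, while the small cases require a direct check. Once that length is secured, the rest—Bertram's factorization, the conjugacy of equal-length cycles, and the transitivity-to-full-group upgrade—is routine from the cited results, and in particular needs nothing about the $n_i$ beyond their being the prescribed orders.
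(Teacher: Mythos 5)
Your proposal is correct and follows essentially the same route as the paper: find a prime $p$ with $\lfloor 3n/4\rfloor\le p\le n-3$ (via the Nagura-type bound plus a finite check), apply Theorem~\ref{T:Bertram} to write $(c_1\cdots c_r)^{-1}$ as a product of two $p$-cycles, invoke Theorem~\ref{T:Miller} using the given transitive subgroup to get all of $A_n$, and conclude via Proposition~\ref{p-conjugates}. The only difference is cosmetic — the paper defers the prime-gap claim to ``imitating the proof of Proposition~\ref{prop-prime}'' while you spell it out.
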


\begin{proof}
Imitating the proof of Proposition \ref{prop-prime}, we can show that for $n\geq 24$, there is always a prime $p$ with $\lfloor 3n/4 \rfloor \leq p\leq n-3$. Henceforth assume that $n\geq 24$ and let $p$ be a prime with $\lfloor 3n/4 \rfloor \leq p\leq n-3$. 

Fix elements $c_1,\dots, c_r \in A_n$ of orders $n_1, n_2, \ldots, n_r$ respectively such that $\langle c_1, \dots, c_r\rangle$ is transitive. By Theorem \ref{T:Bertram}, since $p\geq \lfloor 3n/4 \rfloor$, we can find two $p$-cycles $\ell_1$ and $\ell_2$ whose product is $(c_1\cdots c_r)^{-1}$. Now, the group $\langle \ell_1 , \ell_2, c_1, \dots ,c_r\rangle$ is transitive and contains a $p$-cycle for $p>\lfloor n/2\rfloor$, so by Theorem \ref{T:Miller} we have $\langle \ell_1 , \ell_2, c_1, \dots, c_r\rangle=A_n$. Thus by Proposition \ref{p-conjugates}, it follows that $[1;n_1, n_2, \ldots, n_r]$ is an actual signature for $A_n$.\end{proof}

We can now prove the main theorem for this section. The proof relies heavily on the construction of elements given in Section \ref{S:building}.

\begin{thm}
\label{thm-twoperiods}
When $n\geq 24$ and none of the $n_i$ are divisible by $2$ or $3$, then a potential signature $[1;n_1,\dots ,n_r]$ for $A_n$ with $r\geq 2$ is always an actual signature.
\end{thm}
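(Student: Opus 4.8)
The plan is to apply Proposition~\ref{prop:trans-support}, which reduces the theorem to exhibiting elements $c_1,\dots,c_r\in A_n$ of orders $n_1,\dots,n_r$ that generate a transitive subgroup. Since $\langle c_1,\dots,c_r\rangle\supseteq\langle c_1,c_2\rangle$, I would in fact only arrange that $\langle c_1,c_2\rangle$ is already transitive on all of $\Omega$, and then take $c_3,\dots,c_r$ to be arbitrary elements of the prescribed orders. Because every $n_i$ is coprime to $6$, each $c_i$ is a product of odd-length cycles and so lies in $A_n$ automatically; no parity correction (no extra $2$-cycles) is needed, in contrast to Lemma~\ref{lem-support}.

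To make $\langle c_1,c_2\rangle$ transitive on $\Omega$ I would use the construction of Lemma~\ref{lem-trans}, now exercising the freedom in the filler values $\gamma_{i,j}$ left open there. The group $\langle c_1,c_2\rangle$ is transitive on $\Omega$ exactly when ${\rm Supp}(c_1)\cup{\rm Supp}(c_2)=\Omega$ and the cycles of $c_1,c_2$, linked whenever they share a value, form a single connected configuration. Lemma~\ref{lem-trans} already produces the connectivity through its linking cycles; I would additionally require the $\gamma_{i,j}$ to exhaust $\Omega\setminus{\rm Supp}(c_1)$, which secures full support. Counting the entries of $c_2$ (the linking values contribute exactly $\text{nc}(c_1)+\text{nc}(c_2)-1$ shared points, and the remaining entries must cover the $n-|{\rm Supp}(c_1)|$ points outside ${\rm Supp}(c_1)$) shows this is possible precisely when
\[
|{\rm Supp}(c_1)|+|{\rm Supp}(c_2)|-\text{nc}(c_1)-\text{nc}(c_2)\ \geq\ n-1 .
\]

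It remains to choose the cycle types of $c_1,c_2$, subject to their orders, so that this inequality holds. For an admissible order $m$ coprime to $6$, the quantity $|{\rm Supp}(c)|-\text{nc}(c)=\sum_{\text{cycles }\gamma}(|\gamma|-1)$ is maximized by packing the support with as few, and as long, cycles as the order allows (an $m$-cycle when $m\leq n$, together with the long cycles forced by the prime-power factors, padding any remainder with $p$-cycles for the least prime $p\mid m$). The key estimate I would prove is that this maximum is at least $\lfloor n/2\rfloor$ for every admissible $m$. The extreme case occurs when the order forces every element to consist of a single cycle; this happens only for certain primes (and prime powers) $m$ with $n/2<m\leq n$, where the single cycle has length $m>n/2$ and the maximum equals $m-1\geq\lfloor n/2\rfloor$. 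In every other case one may split the support among two or more long cycles, or pad with $p$-cycles, and do strictly better. Applying this to $c_1$ and $c_2$ gives $|{\rm Supp}(c_1)|-\text{nc}(c_1)+|{\rm Supp}(c_2)|-\text{nc}(c_2)\geq 2\lfloor n/2\rfloor\geq n-1$.

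I expect the main obstacle to be this support estimate together with its tightest instance, $n_1=n_2=$ the smallest prime exceeding $n/2$: there both $c_1$ and $c_2$ are single cycles, the linking construction of Lemma~\ref{lem-trans} degenerates to a single shared value, and one must check by hand that two primes $p,q>n/2$ satisfy $p+q\geq n+1$, so that the longer cycle's fillers still cover the complement of the shorter cycle's support while the two cycles overlap in at least one value. This is exactly the regime where the hypothesis $n\geq 24$ is used, consistent with the range in which Proposition~\ref{prop:trans-support} supplies the prime required by Miller's theorem.
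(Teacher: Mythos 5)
Your proposal is correct and follows essentially the same route as the paper: reduce via Proposition~\ref{prop:trans-support} to making $\langle c_1,c_2\rangle$ transitive, use the linked-cycle construction of Lemma~\ref{lem-trans} with the free values chosen to exhaust ${\rm Supp}^c(c_1)$, and verify a counting inequality comparing free values to the complement of the support. The paper organizes the count slightly differently --- via the $2n/3$ support bound of Lemma~\ref{lem-support} and the observation that at most $2/5$ of the entries of $c_2$ are forced, with a separate case for two single cycles of prime length $>n/2$ --- but this is bookkeeping rather than a different method, and your per-element estimate $|{\rm Supp}(c)|-\text{nc}(c)\geq\lfloor n/2\rfloor$ is an equivalent way to close the same gap.
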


\begin{proof}
By Proposition \ref{prop:trans-support}   it suffices to find $c_1,\dots, c_r \in A_n$ of orders   $n_1, n_2, \ldots, n_r$ respectively such that $\langle c_1, \dots, c_r\rangle$ is a transitive subgroup of $A_n$. We do this by showing that when $n \geq 24$ we can choose $c_1$ of order $n_1$ and $c_2$ of order $n_2$ such that the group $\langle c_1,c_2\rangle$ is transitive, thus allowing us to choose arbitrary elements $c_3,\dots ,c_r$ of orders $n_3,\dots ,n_r$ respectively.

We use the construction of $c_1$ and $c_2$ given in the proof of Lemma \ref{lem-trans}. Specifically, we construct $c_2$ as given in that proof, but we choose the $\gamma_{i,j}$ so they are first chosen from ${\rm Supp}^c (c_1)$, with any remaining $\gamma_{i,j}$ coming from ${\rm Supp} (c_1)$ but not having already appeared in a previous cycle. We call the points which are subscripted $\alpha$'s {\em forced points} of $c_2$ and the remaining points which are subscripted $\gamma$'s {\em free points} of $c_2$. For conciseness, we denote the number of free points of $c_2$ by $\mathfrak{F}$. We also note by Lemma \ref{lem-support}, except when $n_1$ is prime and larger than $n/2$, we can assume the size of the support of both $c_1$ and $c_2$ is at least $2n/3$.

We make some initial observations. By construction, $c_2$ is written in disjoint cycle notation, and since we are assuming $u\geq t$ and each cycle contains at least two points, the forced points defined in the construction of $c_2$ all appear. In addition, there are precisely $2(t-1)+(u-(t-1))=t+u-1$ forced points in $c_2$.   Also note that by construction, $c_1$ and $c_2$ satisfy the conditions of Lemma \ref{L:transitive} and hence by that lemma, the group they generate is transitive on ${\rm Supp} (c_1) \cup {\rm Supp} (c_2)$.  Thus to prove the theorem, we just need to show that ${\rm Supp} (c_1) \cup {\rm Supp} (c_2)=\Omega$.  

First consider the case where $u=t=1$. In this case, $c_1$ and $c_2$ are both single cycles, each of prime length strictly bigger than $\lfloor \frac{n}{2} \rfloor$, sharing at least one point. Moreover, due to their length and the way we have constructed them, their combined support must be all of $\Omega$. Hence, by Corollary \ref{C:transitive}, the group they generate is transitive.

To show ${\rm Supp} (c_1) \cup {\rm Supp} (c_2)=\Omega$ for the remaining cases, it suffices to show that $\mathfrak{F}\geq |{\rm Supp}^c (c_1)|$ since in the construction of the cycles of $c_2$, points in ${\rm Supp}^c (c_1)$ are placed first, so this condition guarantees that all points in ${\rm Supp}^c (c_1)$ will appear in $c_2$ before we start placing points from ${\rm Supp} (c_1)$.

We next consider the case where $t=1$ and $u\geq 2$. Since $c_1$ consists of a single cycle of prime length bigger than $\lfloor \frac{n}{2} \rfloor$, it suffices to show that $\mathfrak{F} \geq \lfloor \frac{n}{2} \rfloor$. Now, by assumption, all cycles of $c_2$ have length at least $5$, so at most $1/5$ of the points of $c_2$ are forced, while at least $4/5$ of them are free. Therefore, since $|{\rm Supp} (c_2 )|\geq2n/3$ we have $$\mathfrak{F} \geq \frac{2n}{3}\cdot \frac{4}{5}=\frac{8n}{15} >\bigg\lfloor \frac{n}{2} \bigg\rfloor.$$

Now suppose $t>1$, so $c_1$ has more than one cycle. Note that since $u\geq t$, we also have $u>1$ and so the size of the support of each of $c_1$ and $c_2$ is at least $2n/3$. Thus we just need to show that $\mathfrak{F} \geq n/3$. Once again, all cycles of $c_2$ have length at least $5$, and by construction, each cycle has at most two forced points. Thus at least $3/5$ of the points of $c_2$ will be free, giving $$\mathfrak{F}\geq \frac{2n}{3} \frac{3}{5} =\frac{2n}{5}>\frac{n}{3}.$$
\end{proof}

\section{The case $[1;k]$}\label{S:oneperiod}

For  $n>4$ the signature $[1;k]$ is a potential signature for all $k \in \mathcal{O}(G)$.  As noted above, for small $n$ there are two cases where potential signatures of this form are not actual signatures. When $n=5$ the potential signature $[1;2]$ is not an actual signature, and when $n=6$ the potential signature $[1;3]$ is not an actual signature. However, for larger $n$, all potential signatures of this form are actual  signatures.

\begin{thm}\label{T:1-k} For all $n>6$, each potential signature $[1;k]$ is an actual signature for $A_n$.  \end{thm}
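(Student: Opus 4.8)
The plan is to realise every such signature through Proposition~\ref{p-conjugates}: for an element $c$ of order $k$ I will exhibit two \emph{conjugate} elements $\ell_1,\ell_2$ of $A_n$ with $\ell_1\ell_2=c^{-1}$ and $\langle \ell_1,\ell_2\rangle=A_n$ (note that $c=(\ell_1\ell_2)^{-1}$ already lies in this group, so only $\langle \ell_1,\ell_2\rangle=A_n$ must be checked). The cases where $2$ or $3$ divides $k$ follow from Theorem~\ref{T:smallprimes} together with the computations of \S\ref{S:compute}, and the range $7\le n\le 16$ is covered in \S\ref{S:compute}; so I assume $n\ge 17$ and $k$ coprime to $6$, whence $k\ge 5$ is odd and the cycles involved are genuinely even. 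Note also that the $3$-cycles that Xu's construction must exclude have order $3$ and so never arise here. I would then split on the size and primality of $k$.

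For the bulk of the cases I would use Theorem~\ref{T:Bertram} as the engine. When $k$ is composite, or prime with $k\le\lfloor n/2\rfloor$, I use Lemma~\ref{lem-support} to build $c$ of order $k$ with support as large as possible (several cycles of lengths dividing $k$), so that $c^{-1}$ has few fixed points; the same applies to the boundary primes $k\in\{n-2,n-1,n\}$, where $c$ already has at most two fixed points. I then write $c^{-1}=\ell_1\ell_2$ as a product of two conjugate $\ell$-cycles and, as in the proof of Theorem~\ref{T:smallprimes}, pad them with the fixed points of $c$ so that $\mathrm{Supp}(\ell_1)\cup\mathrm{Supp}(\ell_2)=\Omega$ and they share a value; the large support of $c$ keeps the resulting lower bound on $\ell$ well below $n-2$. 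By Corollary~\ref{C:transitive} the group is transitive, and I finish in one of two ways: if $\ell$ (or $k$ itself) is a prime in $(\tfrac n2,n-3]$ I invoke Theorem~\ref{T:Miller}; otherwise I choose $\ell$ with $\gcd(n,\ell)=1$ and $\tfrac n2<\ell\le n-3$, so that Theorem~\ref{T:primitive} gives primitivity and the $\ell$-cycle, having at least three fixed points, lets Theorem~\ref{T:jones} conclude $G\ge A_n$. In either case $G=A_n$ since all generators are even.

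The genuinely delicate case is $k$ prime with $\tfrac n2<k\le n-3$. Here $c$ must be a single $k$-cycle (two disjoint $k$-cycles cannot fit), so its support is exactly $k$ and forcing full support pushes the admissible Bertram length $\ell$ up near $n$, where the cycles one needs are no longer conveniently available. Following the hint after Theorem~\ref{T:Xu}, I would instead write $c^{-1}=\ell_1\ell_2$ with $\ell_1,\ell_2$ both conjugate to the element $a$ of Theorem~\ref{T:Xu}, in the strengthened form where the two length-$2$ cycles share \emph{exactly} one value; being of the same cycle type, $\ell_1$ and $\ell_2$ are conjugate in $A_n$, so the ``conjugate elements'' version of Proposition~\ref{p-conjugates} applies. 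Each $\ell_i$ has support $n-1$ or $n$, so the shared-value condition forces $\mathrm{Supp}(\ell_1)\cup\mathrm{Supp}(\ell_2)=\Omega$ and Lemma~\ref{L:transitive} yields transitivity. The decisive point is that this transitive group already contains $c$, a cycle of \emph{prime} length $k$ with $\tfrac n2<k\le n-3$, so Theorem~\ref{T:Miller} immediately gives $A_n$ --- here $c$ itself, rather than any Bertram cycle, supplies Miller's prime cycle.

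The main obstacle is this last case. The real work is (i) verifying that Xu's inductive construction can always be arranged so the two transpositions share exactly one value --- tracking his induction and confirming that the only forced coincidence occurs for $3$-cycles, which we have excluded --- and (ii) deducing transitivity of $\langle\ell_1,\ell_2\rangle$ from that combinatorial structure via Lemma~\ref{L:transitive}. A secondary, purely number-theoretic obstacle is to confirm, in the Bertram cases, that for every $n\ge 17$ the relevant interval contains either a prime (for Theorem~\ref{T:Miller}) or an integer coprime to $n$ (for Theorem~\ref{T:jones}); I would settle this by a Nagura/Bertrand estimate in the spirit of Proposition~\ref{prop-prime} together with a finite check of the small cases.
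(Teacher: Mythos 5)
Your overall strategy --- Bertram's theorem plus padding to force full support, then transitivity via Lemma~\ref{L:transitive}/Corollary~\ref{C:transitive} and Miller or Theorem~\ref{T:primitive}+Jones --- is essentially the paper's, though you organize the cases by the arithmetic of $k$ where the paper splits on the parity of $n$ (for odd $n$ it fixes $\ell=n-4$ once and for all, since $\gcd(n,n-4)=1$, and resorts to Xu only when $n$ is even). The trouble is in your ``delicate case'' $k$ prime with $\tfrac n2<k\le n-3$. The premise that Bertram becomes unusable there is a miscalculation: for a single $k$-cycle the Bertram lower bound is $(k+1)/2$, so after absorbing the $n-k$ fixed points the cycles have length about $n-(k-1)/2<\tfrac{3n}{4}+1$, leaving a wide window below $n-3$. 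This is exactly how the paper treats the case where Lemma~\ref{lem-support} fails: the inequality $\tfrac{\mathrm{nc}(c)}{2}+4\le\tfrac{|\mathrm{Supp}(c)|}{2}$ holds as soon as $k\ge 9$. So no detour is needed.

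More seriously, the Xu detour you take instead has a genuine gap when $n$ is odd. There Xu's element $a=(1\ 2)(3\ 4\ \dots\ n-1)$ has support of size $n-1$, so each of $\ell_1,\ell_2$ fixes one point, and nothing in Theorem~\ref{T:Xu} (or in the shared-transposition condition) prevents them from fixing the \emph{same} point. If they do, $\langle\ell_1,\ell_2\rangle$ fixes that point and cannot be transitive, and adjoining $c$ does not help, since $\mathrm{Supp}(c)\subseteq\mathrm{Supp}(\ell_1)\cup\mathrm{Supp}(\ell_2)$. Your assertion that ``the shared-value condition forces $\mathrm{Supp}(\ell_1)\cup\mathrm{Supp}(\ell_2)=\Omega$'' is therefore false for odd $n$; the paper invokes Xu only for even $n$ precisely because $a$ then has full support. (Your even-$n$ sub-case, with Miller applied to $c$ itself as the prime cycle, is a correct and rather clean variant.) The fix is to run your Bertram argument in the odd prime case as well. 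Your remaining number-theoretic debt --- producing an $\ell$ in the padded window that is prime or coprime to $n$ --- is real but routine via Nagura plus a finite check, as you say.
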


\begin{proof}

In Section \ref{S:compute} we generate specific examples for each $k$ with $6<n<40$ using computer programs. Thus we only need to prove the theorem for $n \geq 40$. By Theorem \ref{T:smallprimes} we assume $2 \nmid k$ and $3 \nmid k$ (this restriction will be used later in the proof). For $n$ odd and $n$ even, we need  different cycle structures so we split the proof into two parts.  

First assume $n$ is odd. Fix an element $c \in A_n$ of order $k$. Apply Bertram's constructions to write $c$ as the product of two $\ell$-cycles of lengths $n-4$. (Note that $n-4>3n/4$ whenever $n>16$.)

Transitivity follows from Lemma \ref{L:transitive} if we can show that the two $\ell$-cycles have full support between them.  To guarantee full support, we study the construction of the $\ell$-cycles from \cite{Bertram}.  Bertram's work guarantees $\ell$-cycles of length $\lfloor \frac{3n}{4} \rfloor$, and those $\ell$-cycles can be extended by adding the points of $\text{Supp}^c(c)$. Once we have done so, we guarantee full support between the two $\ell$-cycles. And so we just need to confirm that we can add all the points of the complement before $\ell$ is larger than  $n-4$.  By Lemma \ref{lem-support}, we can guarantee $\text{Supp}(c) \geq \frac{2n}{3}$.  We also know that $nc(c)$ (the number of nontrivial cycles) is at most $n/5$ (since $2 \nmid k$ and $3\nmid k$).  Putting that together we need to ensure

$$\frac{\text{Supp}(c)+\text{nc}(c)}{2} +n - \text{Supp}(c) \leq n-4$$
or equivalently
$$\frac{\text{nc}(c)}{2} + 4 \leq \frac{\text{Supp}(c)}{2}.$$
Filling in the bounds for $\text{Supp}(c)$ and $\text{nc}(c)$, we see that the inequality will be true if $n \geq 17$.

If, however, Lemma \ref{lem-support} fails then $c$ is a $p$-cycle with $p > \lfloor \frac{n}{2} \rfloor$. This means $\text{nc}(c)=1$ and $\text{Supp}(c)=p$.  Then $\frac{\text{nc}(c)}{2} + 4 \leq \frac{\text{Supp}(c)}{2}$ will be true when $p \geq 9$, or when $n > 14$. 

Finally, to prove primitivity, observe that since $n$ is odd, the $\gcd(n,n-4)=1$ and because of the way we created these $\ell$-cycles, primitivity will follow from Theorem \ref{T:primitive}.

For $n$ even, a similar argument to the one we made in the odd case could fail to give primitivity if $3$ divides $n$. Instead, write the prime factorization $k=p_1^{a_1}\cdots p_m^{a_m}$ for distinct primes $p_1,\dots, p_m$ and $p_i>3$, and choose $c$ to be an element of order $k$ with only one cycle of length $p_i^{a_i}$ for each $i$. 

By Theorem \ref{T:Xu}, this element can be written as a product of two elements conjugate to $(1\  2)(3 \ \dots  n)$ and then since $3$ does not divide $k$, the two involutions will share precisely one point. Hence the  group generated by $c$ and the two conjugate elements will clearly be transitive. If $m\geq 2$, then this group will also be primitive since it will contain two cycles of distinct prime power lengths (take appropriate powers of $c$). The size of any nontrivial block will have to divide both of these values, so will have to be $1$.

This leaves the case $m=1$ with $k=p_1^{a_1}$. If $p_1$ is relatively prime to $n$, then we get primitivity in almost all cases since any block size will have to divide $n$ and $p_1^{a_1}$. There is one small issue in this case with applying Theorem \ref{T:jones}.  When $k=p_1^{a_1}$ and $n-k< 3$ we have less than $3$ fixed points, but in this case we use Corollary \ref{C:largesupport} to show the signature $[1;k]$ is an actual signature for $n\geq 40$. 

Otherwise $p_1$ divides $n$, and we can instead throw away our original choice for $c$ and replace it with an element of full support, in which case Corollary \ref{C:largesupport} again shows the signature $[1;p_1^{a_1}]$ is an actual signature for sufficiently large $n$. \end{proof}

\section{Computations for small $n$}\label{S:compute}

For $n<40$ we use computer programs to help prove that every potential signature is an actual signature.  Of course, for each $n$, there are an infinite number of potential signatures so to successfully check small $n$ values, we need  to reduce that list to a manageable finite number.  

We first find generating vectors for  $[1;k]$ for each $k$ in $\mathcal{O}_G$ (this will complete the proof of Theorem \ref{T:1-k}). Suppose we have a generating vector $(a,b,c)$ corresponding to $[1;k]$. If $k$ is odd, this generating vector guarantees a generating vector for the signature $[1;k,k]$: simply take $(a,b,c^2, c^{-1})$, since $c^2$ and $c^{-1}$ will have the same order as $c$. Note we can repeat this idea to create a generating vector for any signature $[1;k, \ldots,k]$.

 If $k$ is even, the existence of a generating vector for $[1;k]$ guarantees a generating vector for $[1;k,k,k]$: take $(a,b,c, c^{-1},c)$ as a generating vector (and similarly any odd number of $k$'s).  For $k$ even, then, we must also search for generating vectors for $[1;k,k]$.  Once we have those generating vectors we can find a generating vector for $[1;k,k,k,k]$ (and indeed any even number of $k$'s) using the same idea as we did for an odd number of $k$'s.
 
 Finally, for every pair $n_1,n_2 \in \mathcal{O}_G$ with $n_1 \neq n_2$ we demonstrate elements in $A_n$ of order $n_1$ and $n_2$  which generate the group $A_n$.  This guarantees that for any signature $[1;n_1, \ldots, n_r]$ in which at least two distinct values appear among the $n_i$ we can find a generating vector.  We simply pick two $n_i$ that are different and use our generated example of pairs which generate the group. Additionally we choose any elements in $A_n$ with orders equal to the remaining $n_i$.  Then we use Proposition \ref{prop:trans-support} in much the same way as we did in the second to last paragraph of Theorem \ref{thm-twoperiods}.

The computer code (written in Magma \cite{magma} and Python), its output (in particular generating vectors for each of the signature families described above) and more details may be found at \cite{git_repo}.  This work utilized resources from Unity, a collaborative, multi-institution high-performance computing cluster managed by UMass Amherst Research Computing and Data.


\bibliographystyle{abbrv}
\bibliography{anbib}

@article {Bertram,
    AUTHOR = {Bertram, Edward},
     TITLE = {Even permutations as a product of two conjugate cycles},
   JOURNAL = {J. Combinatorial Theory Ser. A},
  FJOURNAL = {Journal of Combinatorial Theory. Series A},
    VOLUME = {12},
      YEAR = {1972},
     PAGES = {368--380},
      ISSN = {0097-3165},
   MRCLASS = {20B05},
  MRNUMBER = {297853},
MRREVIEWER = {K.\ Appel},
       DOI = {10.1016/0097-3165(72)90102-1},
       URL = {https://doi.org/10.1016/0097-3165(72)90102-1},
}

@article {Xu,
    AUTHOR = {Xu, Cheng-Hao},
     TITLE = {The commutators of the alternating group},
   JOURNAL = {Sci. Sinica},
  FJOURNAL = {Scientia Sinica. Zhongguo Kexue},
    VOLUME = {14},
      YEAR = {1965},
     PAGES = {339--342},
      ISSN = {0582-236x},
   MRCLASS = {20.20},
  MRNUMBER = {183763},
MRREVIEWER = {H.\ K.\ Farahat},
}

@article {Miller,
    AUTHOR = {Miller, G. A.},
     TITLE = {Possible orders of two generators of the alternating and of
              the symmetric group},
   JOURNAL = {Trans. Amer. Math. Soc.},
  FJOURNAL = {Transactions of the American Mathematical Society},
    VOLUME = {30},
      YEAR = {1928},
    NUMBER = {1},
     PAGES = {24--32},
      ISSN = {0002-9947,1088-6850},
   MRCLASS = {20B30 (20F05)},
  MRNUMBER = {1501419},
       DOI = {10.2307/1989264},
       URL = {https://doi.org/10.2307/1989264},
}

@book {Wielandt,
    AUTHOR = {Wielandt, Helmut},
     TITLE = {Finite permutation groups},
      NOTE = {Translated from the German by R. Bercov},
 PUBLISHER = {Academic Press, New York-London},
      YEAR = {1964},
     PAGES = {x+114},
   MRCLASS = {20.20},
  MRNUMBER = {183775},
MRREVIEWER = {N.\ Ito},
}

@article {Jones,
    AUTHOR = {Jones, Gareth A.},
     TITLE = {Primitive permutation groups containing a cycle},
   JOURNAL = {Bull. Aust. Math. Soc.},
  FJOURNAL = {Bulletin of the Australian Mathematical Society},
    VOLUME = {89},
      YEAR = {2014},
    NUMBER = {1},
     PAGES = {159--165},
      ISSN = {0004-9727,1755-1633},
   MRCLASS = {20B15 (20B20)},
  MRNUMBER = {3163014},
MRREVIEWER = {W.\ M.\ Kantor},
       DOI = {10.1017/S000497271300049X},
       URL = {https://doi.org/10.1017/S000497271300049X},
}

@article {Nagura,
    AUTHOR = {Nagura, Jitsuro},
     TITLE = {On the interval containing at least one prime number},
   JOURNAL = {Proc. Japan Acad.},
  FJOURNAL = {Proceedings of the Japan Academy},
    VOLUME = {28},
      YEAR = {1952},
     PAGES = {177--181},
      ISSN = {0021-4280},
   MRCLASS = {10.0X},
  MRNUMBER = {50615},
MRREVIEWER = {D.\ H.\ Lehmer},
       URL = {http://projecteuclid.org/euclid.pja/1195570997},
}

@article{Hurwitz1893,
author = {Hurwitz, A.},
journal = {Mathematische Annalen},
keywords = {Riemann surfaces; Hurwitz' theorem},
language = {ger},
pages = {403-442},
title = {Ueber algebraische Gebilde mit eindeutigen Transformationen in sich},
url = {http://eudml.org/doc/157624},
volume = {41},
year = {1893},
}

@article{Klein1878,
author = {Klein, Felix},
Title = {Ueber die Transformation siebenter Ordnung der elliptischen Functionen},
Journal = {Mathematische Annalen},
Volume = {14},
Year = {1878},
Number ={3},
Pages ={428--471},
Doi ={10.1007/BF01677143},
Url ={https://doi.org/10.1007/BF01677143},
}

@article{NAKAMURA20183585,
title = {Generation of finite subgroups of the mapping class group of genus 2 surface by Dehn twists},
journal = {Journal of Pure and Applied Algebra},
volume = {222},
number = {11},
pages = {3585-3594},
year = {2018},
issn = {0022-4049},
doi = {https://doi.org/10.1016/j.jpaa.2018.01.002},
url = {https://www.sciencedirect.com/science/article/pii/S0022404918300082},
author = {Gou Nakamura and Toshihiro Nakanishi},
abstract = {In this note we give presentations, up to conjugacy, of all finite subgroups of the mapping class group of a closed surface of genus 2, using the Humphries generators. An application to homology representations is given.}
}

@article{Fried1991,
author = {Fried, Michael D. and Völklein, Helmut},
Title = {The inverse {G}alois problem and rational points on moduli spaces},
Journal = {Mathematische Annalen},
Volume = {290},
Year = {1991},
Number ={1},
Pages ={771-800},
Doi ={10.1007/BF01459271},
Url ={https://doi.org/10.1007/BF01459271},
}

@article{KULKARNI1987195,
title = {Symmetries of surfaces},
journal = {Topology},
volume = {26},
number = {2},
pages = {195-203},
year = {1987},
issn = {0040-9383},
doi = {https://doi.org/10.1016/0040-9383(87)90059-0},
url = {https://www.sciencedirect.com/science/article/pii/0040938387900590},
author = {Ravi S. Kulkarni}
}

@article{Broughton2022FutureDI,
  title={Future directions in automorphisms of surfaces, graphs, and other related topics},
  author={S. Allen Broughton and Jennifer Paulhus and Aaron Wootton},
  journal={Contemporary Mathematics},
  year={2022},
  url={https://api.semanticscholar.org/CorpusID:246590994}
}

@article{BROUGHTON1991233,
title = {Classifying finite group actions on surfaces of low genus},
journal = {Journal of Pure and Applied Algebra},
volume = {69},
number = {3},
pages = {233-270},
year = {1991},
issn = {0022-4049},
doi = {https://doi.org/10.1016/0022-4049(91)90021-S},
url = {https://www.sciencedirect.com/science/article/pii/002240499190021S},
author = {S.Allen Broughton},
abstract = {The problem of classifying all finite group actions, up to topological equivalence, on a surface of low genus is considered. Several new examples of construction and classification of actions are given. A program for enumerating all finite group actions on a surface of low genus is outlined and the complete classification is worked out for genus 2 and 3. As a by-product all conjugacy classes of finite subgroups of mapping class groups for genus 2 and 3 are determined.}
}

@article{10.1093/qmath/17.1.86,
    author = {Harvey, W. J.},
    title = {CYCLIC GROUPS OF AUTOMORPHISMS OF A COMPACT {R}IEMANN SURFACE},
    journal = {The Quarterly Journal of Mathematics},
    volume = {17},
    number = {1},
    pages = {86-97},
    year = {1966},
    month = {01},
    issn = {0033-5606},
    doi = {10.1093/qmath/17.1.86},
    url = {https://doi.org/10.1093/qmath/17.1.86},
    eprint = {https://academic.oup.com/qjmath/article-pdf/17/1/86/7295803/17-1-86.pdf},
}

@article{CARVACHO2021106552,
title = {Non-abelian simple groups act with almost all signatures},
journal = {Journal of Pure and Applied Algebra},
volume = {225},
number = {4},
pages = {106552},
year = {2021},
issn = {0022-4049},
doi = {https://doi.org/10.1016/j.jpaa.2020.106552},
url = {https://www.sciencedirect.com/science/article/pii/S002240492030253X},
author = {Mariela Carvacho and Jennifer Paulhus and Thomas Tucker and Aaron Wootton},
keywords = {Automorphism groups of compact Riemann surfaces, Fuchsian groups, Branched coverings},
abstract = {The topological data of a finite group G acting conformally on a compact Riemann surface is often encoded using a tuple of non-negative integers (h;m1,…,ms) called its signature, where the mi are orders of non-trivial elements in the group. There are two easily verifiable arithmetic conditions on a tuple which are necessary for it to be a signature of some group action. We derive necessary and sufficient conditions on a group for the situation where all but finitely many tuples that satisfy these arithmetic conditions actually occur as the signature for an action of G on some Riemann surface. As a consequence, we show that all non-abelian finite simple groups exhibit this property.}
}

@article{Bozlee2014,
title = {Asymptotic equivalence of group actions on surfaces and Riemann–Hurwitz solutions},
journal = {Archiv der Mathematik},
volume = {102},
number = {6},
pages = {565-573},
year = {2014},
doi = {10.1007/s00013-014-0657-x},
url = {https://doi.org/10.1007/s00013-014-0657-x},
author = {Bozlee, Sebastian and Wootton, Aaron},
}

@article {MR143801,
    AUTHOR = {Steinberg, Robert},
     TITLE = {Generators for simple groups},
   JOURNAL = {Canadian J. Math.},
  FJOURNAL = {Canadian Journal of Mathematics. Journal Canadien de
              Math\'ematiques},
    VOLUME = {14},
      YEAR = {1962},
     PAGES = {277--283},
      ISSN = {0008-414X,1496-4279},
   MRCLASS = {20.29},
  MRNUMBER = {143801},
MRREVIEWER = {Rimhak\ Ree},
       DOI = {10.4153/CJM-1962-018-0},
       URL = {https://doi.org/10.4153/CJM-1962-018-0},
}

@article {MR2654085,
    AUTHOR = {Liebeck, Martin W. and O'Brien, E. A. and Shalev, Aner and
              Tiep, Pham Huu},
     TITLE = {The {O}re conjecture},
   JOURNAL = {J. Eur. Math. Soc. (JEMS)},
  FJOURNAL = {Journal of the European Mathematical Society (JEMS)},
    VOLUME = {12},
      YEAR = {2010},
    NUMBER = {4},
     PAGES = {939--1008},
      ISSN = {1435-9855,1435-9863},
   MRCLASS = {20D05 (20C15 20D06)},
  MRNUMBER = {2654085},
MRREVIEWER = {Ronald\ Solomon},
       DOI = {10.4171/JEMS/220},
       URL = {https://doi.org/10.4171/JEMS/220},
}

@article {MR1503659,
    AUTHOR = {Jordan, Camille},
     TITLE = {M\'emoire sur les groupes primitifs},
   JOURNAL = {Bull. Soc. Math. France},
  FJOURNAL = {Bulletin de la Soci\'et\'e{} Math\'ematique de France},
    VOLUME = {1},
      YEAR = {1872/73},
     PAGES = {175--221},
      ISSN = {0037-9484},
   MRCLASS = {99-04},
  MRNUMBER = {1503659},
       URL = {http://www.numdam.org/item?id=BSMF_1872-1873__1__175_1},
}

@article {MR1503635,
    AUTHOR = {Jordan, Camille},
     TITLE = {Sur la limite de transitivit\'e{} des groupes non altern\'es},
   JOURNAL = {Bull. Soc. Math. France},
  FJOURNAL = {Bulletin de la Soci\'et\'e{} Math\'ematique de France},
    VOLUME = {1},
      YEAR = {1872/73},
     PAGES = {40--71},
      ISSN = {0037-9484},
   MRCLASS = {99-04},
  MRNUMBER = {1503635},
       URL = {http://www.numdam.org/item?id=BSMF_1872-1873__1__40_1},
}

@article {Asch-Gural,
    AUTHOR = {Aschbacher, M. and Guralnick, R.},
     TITLE = {Some applications of the first cohomology group},
   JOURNAL = {J. Algebra},
  FJOURNAL = {Journal of Algebra},
    VOLUME = {90},
      YEAR = {1984},
    NUMBER = {2},
     PAGES = {446--460},
      ISSN = {0021-8693},
   MRCLASS = {20J06 (20C15 20D08)},
  MRNUMBER = {760022},
MRREVIEWER = {W.\ Kimmerle},
       DOI = {10.1016/0021-8693(84)90183-2},
       URL = {https://doi.org/10.1016/0021-8693(84)90183-2},
}

@misc{git_repo,
Title={Actions of ${A}_n$ for $n \leq 40$},
Author = {Paulhus, J. and Wootton, A.},
Year=2025,
howpublished = {\url{https://github.com/jenpaulhus/An_actions}}
}

@article {magma,
  AUTHOR =       {Bosma, Wieb and Cannon, John and Playoust, Catherine},
  TITLE =        {The {M}agma algebra system. {I}. {T}he user language},
  NOTE =         {Computational algebra and number theory (London, 1993)},
  JOURNAL =      {J. Symbolic Comput.},
  FJOURNAL =     {Journal of Symbolic Computation},
  VOLUME =       24,
  YEAR =         1997,
  NUMBER =       {3-4},
  PAGES =        {235--265},
  ISSN =         {0747-7171},
  MRCLASS =      {68Q40},
  MRNUMBER =     1484478,
  DOI =          {10.1006/jsco.1996.0125},
  URL =          {http://dx.doi.org/10.1006/jsco.1996.0125},
}

@article {MR3719453,
    AUTHOR = {Bagi\'nski, Czes\law and Gromadzki, Grzegorz},
     TITLE = {On generation of the symmetric or the alternating group by two
              cycles},
   JOURNAL = {Colloq. Math.},
  FJOURNAL = {Colloquium Mathematicum},
    VOLUME = {150},
      YEAR = {2017},
    NUMBER = {2},
     PAGES = {161--174},
      ISSN = {0010-1354,1730-6302},
   MRCLASS = {20B30 (20B25 20B35)},
  MRNUMBER = {3719453},
MRREVIEWER = {Pablo\ Spiga},
       DOI = {10.4064/cm7090-10-2016},
       URL = {https://doi.org/10.4064/cm7090-10-2016},
}

\end{document}